\newcommand{\R}{\mathscr{R}} 
\newcommand{\p}{\mathscr{P}} 
\newcommand{\ratk}{\mathrm{k}}  
\newcommand{\G}{G} 
\newcommand{\B}{{B_G}} 
\newcommand{\TT}{{T_G}} 
\newcommand{\Z}{\mathbb{Z}}   
\newcommand{\C}{\mathbb{C}}   
\newcommand{\T}{\mathtt{T}}   
\newcommand{\Ttm}{\T_\trip{m}}  
\newcommand{\HH}{\mathscr{H}} 
\newcommand{\I}{\mathscr{I}} 
\newcommand{\triv}{\mathbf{1}} 
\newcommand{\cond}{\mathrm{cond}}
\newcommand{\trip}[1]{{\mathtt{#1}}}
\newcommand{\diag}{\mathrm{diag}}
\newcommand{\Ind}{\mathrm{Ind}} 
\newcommand{\val}{\mathrm{val}} 
\newcommand{\op}{\mathrm{op}}
\newcommand{\Rcd}{\mathtt{R}_{\trip{c},\trip{d}}}
\newcommand{\Rcc}{\mathtt{R}_{\trip{c},\trip{c}}}
\newcommand{\Rsupp}{\mathtt{S}}
\newcommand{\Rsuppcd}{\mathtt{S}_{\trip{c},\trip{d}}}
\newcommand{\Rsuppcc}{\mathtt{S}_{\trip{c},\trip{c}}}
\newcommand{\Sset}{\mathscr{S}}  
\newcommand{\Wcd}{W_{\trip{c},\trip{d}}}
\newcommand{\Wcc}{W_{\trip{c},\trip{c}}}
\newcommand{\Tcd}{\T_{\trip{c},\trip{d}}}
\newcommand{\acd}{\trip{a}({\trip{c},\trip{d}})}
\newcommand{\acc}{\trip{a}({\trip{c},\trip{c}})}
\newcommand{\Xacd}{\mathtt{X}^\trip{a}_{\trip{c,d}}}
\newcommand{\Xacc}{\mathtt{X}^\trip{a}_{\trip{c,c}}}
\theoremstyle{plain}
\newtheorem{theorem}{Theorem}[section]
\newtheorem{proposition}[theorem]{Proposition}
\newtheorem{corollary}[theorem]{Corollary}
\theoremstyle{definition}
\newtheorem{definition}[theorem]{Definition}
\theoremstyle{remark}
\newtheorem{example}[theorem]{Example}
\numberwithin{equation}{section}
\numberwithin{figure}{section}
\numberwithin{table}{section}
\begin{document}
\title[Principal Series of $\mathrm{GL}(3)$]{Branching Rules for Ramified Principal Series Representations of $\mathrm{GL}(3)$ over a $p$-adic field}
\author{Peter S. Campbell}
\address{Department of Mathematics, University of Bristol, UK}
\email{peter.campbell@bristol.ac.uk}

\author{Monica Nevins}
\address{Department of Mathematics and Statistics, University of Ottawa, Canada}\email{mnevins@uottawa.ca}
\thanks{This research is supported by grants from NSERC and from the Faculty of Science of the University of Ottawa.  The first author would also like to acknowledge the support of the Centre de Recherches en Math\'ematiques (CRM) while at the University of Ottawa.}
\keywords{principal series representations, branching rules, maximal compact subgroups, representations of p-adic groups}

\subjclass[2000]{Primary: 20G25; Secondary: 20G05}
\date{\today}

\begin{abstract}
We decompose the restriction of ramified principal series representations of
the $p$-adic group $\mathrm{GL}(3,\ratk)$ to its maximal compact subgroup $K=\mathrm{GL}(3,\R)$.
Its decomposition is dependent on the degree of ramification of the inducing
characters and can be characterized in terms of filtrations of the Iwahori
subgroup in $K$.  We establish several irreducibility results and
illustrate the decomposition with some examples.
\end{abstract}
\maketitle

\section{Introduction}

The complex representations of $p$-adic algebraic groups are of great
interest, both in their own right and in what they can reveal through
the Langlands program in number theory.   The representation theory of
$p$-adic groups also often mirrors the theory for real Lie groups,
and it is especially interesting to see how analogous results will
develop.

To this end, one goal is to examine the finer structure of
 representations by considering their restrictions to
compact open subgroups.  The theory of types promises that one
can classify representations in the Bernstein decomposition by
identifying among
certain representations of compact open subgroups which ones
they contain.
In contrast, in the theory of
real Lie groups, the \emph{maximal} compact subgroups have a crucial
role, encoding as they do all the topology of the group, and
one classifies irreducible unitary representations by classifying
the irreducible Harish-Chandra modules.
Our interest is to explore to what extent information about
the representations of the $p$-adic group resides in the maximal
compact subgroup.

Representations of compact subgroups of $p$-adic groups are very
tangible at a number of levels.  Firstly,
the representations
of sufficiently small (exponentiable)
compact open subgroups can all be constructed
using Kirillov theory, as shown by Howe~\cite{Howe}.  Secondly,
each compact open subgroup is pro-finite and consequently
its representation theory is largely determined by the representation theory
of Lie groups over finite local rings.  Finally,
any admissible representation of a $p$-adic group
decomposes with finite
multiplicity upon restriction to a compact open subgroup
and so one can expect to recover
information about the original representation by examining these
constituents.

That said, the maximal compact subgroups are not exponentiable so
Howe's theory does not apply; and furthermore
little information is known in general about the representation theory
of Lie groups over local rings.
Therefore one objective of this study is to provide some
interchange between the representation theories of $p$-adic groups
and of Lie groups over local rings.

Let $\ratk$ be a $p$-adic field and denote by $\R$ its integer ring.
In this paper we consider the group $\G = \mathrm{GL}(3,\ratk)$ and let
$K = \mathrm{GL}(3,\R)$ be a maximal compact subgroup.  In \cite{CNu}, the authors
considered unramified principal series representations and showed
how their restriction to $K$ decomposed as per the double cosets
in $K$ of smaller compact open
subgroups $C_\trip{c}$  (defined in Section~\ref{S:unram}).
In \cite{CNu}, the added assumption that the
inducing character was trivial implied that every double
coset supported an intertwining operator of the representation,
an assumption we relax here.

This paper is organized as follows.  In Section~\ref{S:unram} we
set our notation and
 recall some necessary results from \cite{CNu}.
The key calculation for determining the decomposition is the
determination of the double cosets in $C_\trip{c}\backslash K / C_\trip{d}$
which support intertwining operators for the restricted principal
series representation; this is the main result in Section~\ref{S:support}.
We go on to consider questions of irreducibility in Section~\ref{S:counting}
and conclude with several examples to illustrate these decompositions
in Section~\ref{S:examples}.

The question of parameterizing double cosets of $B$ in $K$, and
more generally of the subgroups $C_{(n,n,n)}$ in $K$, has been
visited and solved  by several authors with various goals in mind.
In \cite{Onn} the goal was to look at which Bruhat decompositions
would be independent of the
characteristic of the residue field; the answer was that only $\mathrm{GL}(2,\ratk)$
has this property.  This implies, in particular, that the decomposition
of principal series is essentially independent of $p$ for $\mathrm{GL}(2,\ratk)$ (see \cite{Monica1, Silberger})
but will depend on the properties of the residue field in all other
cases.

Several authors have considered related questions on the decomposition
of representations of $p$-adic groups upon restriction to a maximal
compact subgroup.  These include the works on Silberger
on $\mathrm{GL}(2,\ratk)$~\cite{Silberger}, the second author on $\mathrm{SL}(2,\ratk)$~\cite{Monica1}, and
Bader and Onn~\cite{Bader} on the Grassmann representation of $\mathrm{GL}(n,\ratk)$.
Gregory Hill has also constructed classes of representations of
$\mathrm{GL}(n,\R)$ in \cite{Hill}; a key part of his results was
the determination of the double cosets of the subgroups $C_{(0,j,j)}$ in $K$.


\section{Notation and Background} \label{S:unram}

Let $\ratk$ be a $p$-adic field of characteristic 0 and residual
characteristic $p$.  Let $q$ denote the number of elements in the
residual field of $\ratk$.  We assume throughout that $p>2$ and $q>3$.
Denote the  integer ring of $\ratk$ by $\R$ and the
maximal ideal of $\R$ by $\p$.
Choose a uniformizer $\pi$ and normalize the discrete valuation
on $\ratk$ so that $\val(\pi)=1$.

Let $\G = \mathrm{GL}(3,\ratk)$ and let $K = \mathrm{GL}(3,\R)$.  Write $\TT$ for the diagonal torus
in $\G$ and $\B$ for the upper triangular Borel subgroup.  Write
$T = \TT \cap K$ and $B = \B \cap K$ for their intersections with
$K$.

\subsection{Principal series and Posets} \label{S:principalseries} Let $\chi_\G$ be a character, not necessarily unitary, of the torus $\TT$ and extend it trivially over
the subgroup $\B$; then the (normalized) induced representation
 $\phi_\G = \Ind_\B^\G \chi_\G$ is a principal series
representation of $\G$.  We consider its restriction to $K$.  Writing
$\chi = \chi_\G\vert_{T}$ and $\phi = \phi_\G\vert_K$, we have that
$\phi = \Ind_B^K \chi$  since $K$ is a good maximal compact.  The principal
series representation is called \emph{ramified} if $\chi \neq \triv$.
The unramified case was considered in \cite{CNu}.

Given a ramified character $\chi_\G$ of $\TT$, we may write it as
$\chi_\G = (\chi_1,\chi_2,\chi_3)$ for characters $\chi_{i} : \ratk^{\times} \rightarrow\C^{\times}$.  Recall
that the \emph{conductor} of a character $\chi_i$ of $\ratk^\times$
is the least $m \geq 0$ such that $1+\p^m \subseteq \ker(\chi_i)$;
thus we make the convention that $\cond(\chi_i)=0$ if and only if
$\chi_i\mid_{\R^\times} = \triv$.

The use of
normalized induction implies that $\Ind_\B^\G \chi \simeq \Ind_\B^\G \chi^w$
for any $w$ in the Weyl group of $\G$, so we may reorder the characters
$\chi_i$ in a convenient way.  Moreover, if $\psi$ is a character of
$\ratk^\times$ and $\psi \cdot \chi = (\psi\chi_1,\psi \chi_2,\psi \chi_3)$,
then $\Ind_\B^\G \psi \cdot \chi = (\psi\circ \det) \Ind_\B^\G \chi$.
It follows that we may assume that $\chi_1 = \triv$ and that
$$
0 \leq M =  \cond(\chi_2) \leq \cond(\chi_3) = N.
$$
Then $\cond(\chi_1\chi_2^{-1}) = M$ and we may furthermore assume that
$\cond(\chi_2\chi_3^{-1}) = \cond(\chi_1\chi_3^{-1}) = N$.
Define $\trip{m} = (M,N,N)$.
We will assume throughout that $\chi \neq \triv$, so
in particular $\chi_3 \neq \triv$  and  $N > 0$.

Let $\T = \{ \trip{c} = (c_1,c_2,c_3) \in \Z^3 \colon 0 \leq c_1, c_2 \leq c_3 \leq c_1+c_2\}$ and note that $\trip{m} = (M,N,N) \in \T$.  Then $\T$ is
a poset with $\trip{c} \preceq \trip{d}$ if $c_i \leq d_i$ for all $i$.
We are particularly interested in the subposet $\Ttm = \{ \trip{c} \in \T \colon \trip{c} \succeq \trip{m}\}$.

Given $\trip{c} \in \T$, we define a subgroup $C_\trip{c}$
by
$$
C_\trip{c} = \left[ \begin{matrix} \R & \R & \R \\ \p^{c_1} & \R & \R \\
\p^{c_3} & \p^{c_2} & \R \end{matrix} \right] \cap K.
$$
Then $C_\trip{c} \subseteq C_\trip{d}$ if and only if $\trip{c} \succeq \trip{d}$.

Let $K_n$ denote
the $n$th principal congruence subgroup of $K$, that is, the normal subgroup of $K$
consisting of all those matrices which are equivalent to the identity matrix
modulo $\p^n$.  Then for all $\trip{c} \in \T$ we have $C_\trip{c} \supset K_{c_3}$.

\subsection*{Subrepresentations}
Let $\chi$ be the restriction to $T$ of a character of $\TT$, with the above conventions; then in particular $\chi_1 = \triv$.  If $\trip{c} \in \Ttm$,
then we can extend $\chi$ to a character of $C_\trip{c}$,  denoted $\chi_\trip{c}$ or simply $\chi$ if there is no possibility of confusion.
 Namely, given
$g = (g_{ij}) \in C_\trip{c}$, we define
 $\chi_\trip{c}(g) = \chi_2(g_{22})\chi_3(g_{33})$.
One verifies directly that this is multiplicative exactly when $c_1 \geq M$
and $c_2, c_3 \geq N$.

\begin{definition} \label{D:Uc}
For each $\trip{c}\in \Ttm$,
set $U_\trip{c} = \Ind_{C_\trip{c}}^K \chi_\trip{c}$.
\end{definition}

We have from \cite{CNu} that $\dim U_\trip{c} = (q+1)(q^2+q+1)q^{c_1+c_2+c_3-3}$ if $c_1c_2 >0$ and $\dim U_\trip{c} = (q^2+q+1)q^{2(c_1+c_2-1)}$ if exactly one
of $c_1$ or $c_2$ is zero.
The representation $U_\trip{c}$ is
naturally a subrepresentation of $\phi$; in fact, it
is contained in the subspace of $K_{c_3}$-fixed vectors of $\phi$.
Consequently,
one may also view $U_\trip{c}$ as a representation of the finite
group $K/K_{c_3}$.

If $\trip{c} \succeq \trip{d}$ then we have $U_\trip{d} \subseteq U_\trip{c}$, so set
$$
V_\trip{c} = U_\trip{c} / \sum_{\trip{d} \in \Ttm, \trip{d} \prec \trip{c}} U_\trip{d}.
$$
This quotient can be identified with a summand of $\phi$.   These
summands are the building blocks of the decomposition of $\phi$ that
we wish to study, so let us refine our description of $V_\trip{c}$.

Let $\trip{c} \in \Ttm$.
If $c_1c_2 \neq 0$, define for
each $i \in \{1,2,3\}$ the triple $\trip{c}_{\{i\}} = (c_1-\delta_{i1},c_2-\delta_{i2},c_3-\delta_{i3}) \in \mathbb{Z}^3$; if $c_1=0$ then only consider
$\trip{c}_{\{3\}} = (0,c_2-1,c_2-1)$; and if $c_2=0$ then only consider
$\trip{c}_{\{3\}} = (c_1-1,0,c_1-1)$.  Set $\Sset_\trip{c} = \{ i \colon \trip{c}_{\{i\}} \in \Ttm \}$; then for all $\trip{d} \prec \trip{c}$ such that $\trip{d} \in \Ttm$, there is
some $i\in \Sset_\trip{c}$ such that $\trip{d} \preceq \trip{c}_{\{i\}}$.

Further, let $\trip{c}_\emptyset = \trip{c}$ and for each non-empty
$I \subseteq \Sset_\trip{c}$ define $\trip{c}_I = \max\{ \trip{d} \in \Ttm \colon \trip{d} \preceq \trip{c}_{\{i\}} \text{ for all } i \in I\}$; this set contains
$\trip{m}$ and so is nonempty.
For example,
if $\trip{m} = (0,0,0)$ and $\trip{c} = (2,3,4)$ then $\trip{c}_{\{1,2\}} = (1,2,3)$ since
$(1,2,4) \notin \Ttm$.
We have the following result from \cite{CNu}.

\begin{theorem} \label{T:calc}
For any $\trip{c}\in \Ttm$ we have
\begin{equation} \label{E:G1}
[V_{\trip{c}}] = \sum_{I \subseteq \Sset_{\trip{c}}}  (-1)^{\vert I \vert }[U_{\trip{c}_{I}}],
\end{equation}
where $[V]$ denotes the equivalence class of $V$ in the Grothendieck group of $K$.
\end{theorem}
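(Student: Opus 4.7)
The plan is to translate the defining quotient of $V_\trip{c}$ directly into the stated Grothendieck-group identity, in two stages: reducing the sum over all $\trip{d} \prec \trip{c}$ in $\Ttm$ to a sum over the atomic triples $\trip{c}_{\{i\}}$, and then applying an inclusion--exclusion argument after identifying the intersections with further induced subrepresentations.

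For the first stage, since $V_\trip{c} = U_\trip{c}/W$ with $W = \sum_{\trip{d} \in \Ttm,\,\trip{d} \prec \trip{c}} U_\trip{d}$, one has $[V_\trip{c}] = [U_\trip{c}] - [W]$. The paragraph preceding the theorem observes that every such $\trip{d}$ satisfies $\trip{d} \preceq \trip{c}_{\{i\}}$ for some $i \in \Sset_\trip{c}$; combined with the monotonicity $U_\trip{d} \subseteq U_\trip{e}$ whenever $\trip{d} \preceq \trip{e}$ (immediate from $C_\trip{e} \subseteq C_\trip{d}$ together with the fact that each character $\chi_\trip{d}$ is given by the single formula $g \mapsto \chi_2(g_{22})\chi_3(g_{33})$), this collapses $W$ to $\sum_{i \in \Sset_\trip{c}} U_{\trip{c}_{\{i\}}}$.

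For the second stage, I apply inclusion--exclusion for sums of subrepresentations of $\phi$ in the Grothendieck group:
$$
[W] \;=\; \sum_{\emptyset \neq I \subseteq \Sset_\trip{c}} (-1)^{|I|+1}\, \Bigl[\,\bigcap_{i \in I} U_{\trip{c}_{\{i\}}}\,\Bigr].
$$
Granting the identity $\bigcap_{i \in I} U_{\trip{c}_{\{i\}}} = U_{\trip{c}_I}$ for each nonempty $I$, together with the convention $\trip{c}_\emptyset = \trip{c}$ (so that $[U_\trip{c}]$ is absorbed as the $I = \emptyset$ term), a simple rearrangement yields the claimed formula $[V_\trip{c}] = \sum_{I \subseteq \Sset_\trip{c}} (-1)^{|I|}[U_{\trip{c}_I}]$.

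The main obstacle is establishing this intersection identity (together with the distributivity of the sublattice generated by the $U_{\trip{c}_{\{i\}}}$ needed to justify the inclusion--exclusion step). Realising each $U_\trip{d}$ as the subspace of $\phi$ consisting of functions that transform by $\chi_\trip{d}$ under left multiplication by $C_\trip{d}$, the intersection becomes the space of functions transforming by the common character under the subgroup $H_I = \langle C_{\trip{c}_{\{i\}}} : i \in I \rangle$ of $K$. I would verify by an explicit generator--commutator computation that $H_I = C_{\trip{c}_I}$: each $C_{\trip{c}_{\{i\}}}$ differs from $C_\trip{c}$ only by adjoining a single elementary unipotent in a prescribed position, and commutator identities of the shape $[E_{21}(a), E_{32}(b)] = E_{31}(ab)$ refine the naive componentwise minimum of the $\trip{c}_{\{i\}}$ into the unique maximal triple $\trip{c}_I \in \Ttm$ dominated by each of them (matching the defining example $(2,3,4)_{\{1,2\}} = (1,2,3)$). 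Once $H_I = C_{\trip{c}_I}$ is in hand, the common formula for the characters $\chi_\trip{d}$ ensures consistent restriction to $H_I$, so $\bigcap_{i \in I} U_{\trip{c}_{\{i\}}} = U_{\trip{c}_I}$; an analogous analysis of sums of equivariant function spaces supplies the distributivity required earlier, completing the proof.
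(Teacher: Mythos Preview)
The paper does not actually prove Theorem~\ref{T:calc}: it is quoted verbatim from the companion paper \cite{CNu}, so there is no in-paper argument to compare against.  Your outline is nonetheless the natural one, and the reduction $W=\sum_{i\in\Sset_\trip{c}}U_{\trip{c}_{\{i\}}}$ together with the identification $\bigcap_{i\in I}U_{\trip{c}_{\{i\}}}=U_{\trip{c}_I}$ via $H_I=\langle C_{\trip{c}_{\{i\}}}:i\in I\rangle=C_{\trip{c}_I}$ is correct in spirit.  For the latter you should note that the na\"{\i}ve componentwise minimum of the $\trip{c}_{\{i\}}$ can fall outside $\T$ (your own example $(2,3,4)_{\{1,2\}}=(1,2,3)$ illustrates this), and that it is the conjugation identity $E_{12}(1)^{-1}E_{31}(a)E_{12}(1)=E_{31}(a)E_{32}(a)$, in addition to the commutator $[E_{21},E_{32}]$ you mention, which forces $H_I$ down to the genuine maximum in $\T$; one must also observe that since each $\trip{c}_{\{i\}}$ already lies in $\Ttm$, this $\T$-maximum automatically lies in $\Ttm$, so that it coincides with the poset-theoretic $\trip{c}_I$ used in the statement.

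The one step you gloss over---``an analogous analysis of sums of equivariant function spaces supplies the distributivity''---is the real content when $\lvert\Sset_\trip{c}\rvert=3$.  The identity $[A+B]=[A]+[B]-[A\cap B]$ is automatic, but for three subspaces one needs $U_{\trip{c}_{\{1\}}}\cap(U_{\trip{c}_{\{2\}}}+U_{\trip{c}_{\{3\}}})=(U_{\trip{c}_{\{1\}}}\cap U_{\trip{c}_{\{2\}}})+(U_{\trip{c}_{\{1\}}}\cap U_{\trip{c}_{\{3\}}})$, and there is no ``dual'' description of $U_\trip{d}+U_\trip{e}$ as an equivariance condition that makes this immediate.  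One clean way to close the gap is to sidestep distributivity entirely: show first (by induction on the poset, using only the two-term formula) that $[U_\trip{c}]=\sum_{\trip{d}\preceq\trip{c},\ \trip{d}\in\Ttm}[V_\trip{d}]$, and then apply M\"obius inversion on the interval $[\trip{m},\trip{c}]$ in $\Ttm$, checking that its M\"obius function is exactly $\mu(\trip{c}_I,\trip{c})=(-1)^{\lvert I\rvert}$ and zero elsewhere.  Either way, this step deserves an explicit argument rather than an appeal to analogy.
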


Since the $U_\trip{c}$ are essentially induced representations of
finite groups, the dimension $\I(U_\trip{c},U_\trip{d})$
of the space of intertwining operators
between $U_\trip{c}$ and $U_\trip{d}$ is equal
$\dim \HH(\chi_\trip{c}, \chi_\trip{d})$ where
$$
\HH(\chi_\trip{c},\chi_\trip{d}) = \{ f \colon K \to \mathbb{C} \colon f(gkg') = \chi_\trip{c}(g) f(k) \chi_\trip{d}(g') \forall g \in C_\trip{c}, g' \in C_{\trip{d}}\}.
$$
As an immediate
corollary of the above theorem we therefore have an effective means of
determining the number of intertwining operators between the various quotients
$V_\trip{c}$.

\begin{corollary} \label{C:intops}
Let $\trip{c},\trip{d} \in \Ttm$.  Then
the dimension of the space of intertwining operators between $V_\trip{c}$
and $V_\trip{d}$ is
$$
\I(V_\trip{c},V_\trip{d}) = \sum_{I \subseteq \Sset_\trip{c}, J \subseteq \Sset_\trip{d}} (-1)^{\vert I \vert + \vert J \vert}\; \I(U_{\trip{c}_I}, U_{\trip{d}_J}).
$$
\end{corollary}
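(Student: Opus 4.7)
The plan is to apply the pairing $\I(-,-) = \dim_{\C} \mathrm{Hom}_K(-,-)$ to the Grothendieck-group identity of Theorem~\ref{T:calc} in both arguments simultaneously, and then expand the resulting double sum.

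First I would verify that the pairing is well-defined and biadditive on the relevant Grothendieck group. Each $U_\trip{c}$ and hence each subquotient $V_\trip{c}$ factors through the finite quotient $K/K_{c_3}$, so choosing $n \geq \max(c_3, d_3)$, every representation named in the statement of the corollary---the $U_{\trip{c}_I}$, $U_{\trip{d}_J}$, $V_\trip{c}$, and $V_\trip{d}$---is a complex representation of the finite group $K/K_n$. By Maschke's theorem these are completely reducible, so the Grothendieck group of finite-dimensional complex representations of $K/K_n$ is free abelian on isomorphism classes of irreducibles, and $\dim_{\C} \mathrm{Hom}_K(-,-)$ extends uniquely to a biadditive pairing on this group.

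Next I would apply this biadditive pairing to the identity
$$
[V_\trip{c}] = \sum_{I \subseteq \Sset_\trip{c}} (-1)^{\vert I\vert} [U_{\trip{c}_I}]
$$
and its analogue for $[V_\trip{d}]$, distributing over the resulting double sum to obtain
$$
\I(V_\trip{c}, V_\trip{d}) = \sum_{I \subseteq \Sset_\trip{c},\, J \subseteq \Sset_\trip{d}} (-1)^{\vert I\vert + \vert J\vert}\, \I(U_{\trip{c}_I}, U_{\trip{d}_J}),
$$
as claimed.

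There is no genuine obstacle here: Theorem~\ref{T:calc} has already done the delicate combinatorial bookkeeping with the posets $\Ttm$, $\Sset_\trip{c}$, and the associated inclusion-exclusion. The only content of the corollary is the observation that, because we are working in a category of semisimple representations of a finite group over $\C$, an identity in the Grothendieck group transfers immediately to an identity of dimensions of $\mathrm{Hom}$-spaces in either variable.
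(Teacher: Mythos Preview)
Your proposal is correct and matches the paper's own approach: the paper also treats the corollary as an immediate consequence of Theorem~\ref{T:calc}, obtained by applying the biadditive pairing $\I(-,-)$ (interpreted via the finite quotients $K/K_n$) in both variables. Your explicit remark that all representations involved factor through a common finite quotient $K/K_n$ with $n \geq \max(c_3,d_3)$, so that semisimplicity and biadditivity of $\dim\mathrm{Hom}$ are available, is exactly the justification the paper has in mind when it says the result follows ``as an immediate corollary.''
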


\subsection{Distinguished double coset representatives of $C_\trip{c}\backslash K / C_\trip{d}$} \label{S:doublecosets}

We recall the parametrization of representatives for the double coset space
$C_\trip{c}\backslash K / C_\trip{d}$, as given in \cite{CNu}.

Let $\T^{1} = \{\trip{a} = (a_1,a_2,a_3) \in \Z^3 \colon 1 \leq a_1, a_2 \leq a_3 \}$.  Given $\trip{c} \in \T$, define $\underline{\trip{c}} \in \T \cap \T^{1}$
by $\underline{c}_i = \max\{ c_i, 1\}$ for each $i$.

\begin{definition} \label{D:Tcd}
For any $\trip{c}, \trip{d} \in \T$, set
\begin{equation} \label{E:Tcddef}
\Tcd = \left\{ \trip{a} \in \T^{1} \colon \trip{a} \preceq \underline{\trip{c}}, \trip{a} \preceq \underline{\trip{d}} \; \text{and} \;
a_3 \leq \min\{a_1+\underline{c}_2,\underline{d}_1+a_2\} \right\}
\end{equation}
with the following exceptions:
\begin{equation} \label{E:Tdefexcept}
\Tcd =
\begin{cases}
\{(1,1,1)\} & \text{if $\trip{c}$ or $\trip{d}$ equals $(0,0,0)$;}\\
\left\{ (1,a,a) \colon a \leq \min\{c_2,d_2\} \right\} & \text{if $c_2d_2>0$ and $c_1=d_1=0$; and}\\
\left\{ (a,1,a) \colon a \leq \min\{c_1,d_1\} \right\} & \text{if $c_1d_1>0$ and $c_2=d_2=0$.}\\
\end{cases}
\end{equation}
\end{definition}

Next, for $\trip{a} \in \Tcd$ set $\min\{\trip{a}\} = \min\{a_1,a_2,a_3\}$.
Then we define
\begin{align} \label{E:defacd}
\acd = \max&\{0, \min\{a_1,a_2, a_3-a_1, a_3-a_2, \trip{c}-\trip{a}, \trip{d}-\trip{a},\\
\notag &  a_1+c_2-a_3, d_1+a_2-a_3\}\}
\end{align}
and
\begin{equation} \label{E:defacdprime}
\acd' = \max\{0,\min\{d_3-a_3,c_3-a_3,c_1-a_1,d_2-a_2\}\} \geq \acd.
\end{equation}

For $x \in \R/\p^k$, define $\val(x) = \min\{ \val(y) \colon y+\p^k = x\}$; then
$(\R/\p^k)^\times = \{ x \in \R/\p^k \colon \val(x)=0 \}$. We set
$$
\Xacd = \begin{cases}
(\R/\p^{\acd})^\times & \text{if $a_1+a_2 \neq a_3$;}\\
\bigcup_{i=0}^{\acd'} (1+\pi^i\R^\times) \cap (\R/\p^{\acd+i})^\times \cap (\R/\p^{\acd'})^\times & \text{if $a_1+a_2=a_3$.}
\end{cases}
$$
In other words, in this latter case,
if $\val(x-1)=i> 0$ then $x$ and $y$ represent the same
element of $\Xacd$ exactly when
$\val(x-y) \geq \min\{i+\acd, \acd'\}$.


\begin{definition}
Let $\trip{c},\trip{d} \in \T$.  Enumerate the elements of $W \simeq S^3$ as
$$
W = \{1,s_1,s_2,s_1s_2,s_2s_1,w_0\}
$$
where $s_i$ is the transposition $(i \ \ i+1)$ and $w_0$ is the longest element.  Define a subset $\Wcd$ of $W$
as
$$
\Wcd = \begin{cases}
W & \text{if $\trip{c}, \trip{d} \succeq (1,1,1)$;}\\
\{1,s_1,w_0\} & \text{if $c_1d_1(c_2+d_2) >0$ and $c_2d_2 =0$;}\\
\{1,s_2,w_0\} & \text{if $c_1d_1 = 0$ and  $(c_1+d_1)c_2d_2 > 0$;}\\
\{1,w_0\} & \text{if $c_1c_2=0$ and $d_1d_2=0$ but $(c_1+c_2)(d_1+d_2) > 0$;}\\
\{1\} & \text{if $\trip{c} = (0,0,0)$ or $\trip{d} = (0,0,0)$}.
\end{cases}
$$
\end{definition}

The following theorem is proven in \cite{CNu}.

\begin{proposition}\label{P:doubleCccosets}
Let $\trip{c},\trip{d} \in \T$.
A complete set of distinct double coset representatives $\Rcd$ of $C_\trip{c}\backslash K / C_\trip{d}$ is
$$
\Rcd = \bigcup_{w \in \Wcd} \Rcd^w
$$
where for $w \in \Wcd$ we define $\Rcd^w$ as follows.
\begin{enumerate}[(i)]
\item $\Rcd^1 = \left\{t_{\trip{a},x} = \left[ \begin{matrix} 1 & 0 & 0 \\ \pi^{a_1} & 1 & 0\\
x\pi^{a_3} & \pi^{a_2} & 1\end{matrix} \right] \colon
\trip{a} \in \Tcd, x \in \Xacd\right\}$;

\item 
$\Rcd^{s_1} = \left\{ s_1^{(\alpha,\beta)} =\left[ \begin{matrix} 0 & 1 & 0\\ 1 & 0 & 0 \\ \pi^{\beta} & \pi^{\alpha} & 1 \end{matrix} \right] \colon \begin{array}{rcl}
1 \leq &\alpha& \leq \min\{\underline{d}_2,c_3\}\\
 1 \leq &\beta& \leq \min\{\underline{c}_2,d_3\}\\
 -c_1 \leq &\beta-\alpha& \leq d_1
\end{array} \right\};$

\item 
$\Rcd^{s_2} = \left\{s_2^{(\alpha,\beta)} = \left[ \begin{matrix} 1 & 0 & 0\\ \pi^{\beta} & 0 & 1 \\ \pi^{\alpha} & 1 & 0 \end{matrix} \right] \colon
\begin{array}{rcl}1 \leq &\alpha& \leq \min\{\underline{d}_1,c_3\}\\
 1 \leq &\beta& \leq \min\{\underline{c}_1,d_3\}\\
 -c_2 \leq &\beta-\alpha& \leq d_2 \end{array} \right\};$

\item 
$\Rcd^{s_1s_2} = \left\{ s_1s_2^{(\alpha)}=\left[ \begin{matrix} 0 & 0 & 1\\ 1 & 0 & 0 \\ \pi^{\alpha} & 1 & 0 \end{matrix} \right] \colon 1 \leq \alpha \leq \min\{d_1,c_2\}\right\};$

\item 
$\Rcd^{s_2s_1} = \left\{ s_2s_1^{(\alpha)} =  \left[ \begin{matrix} 0 & 1 & 0\\ 0 & \pi^{\alpha} & 1 \\  1 & 0&0 \end{matrix} \right] \colon 1 \leq \alpha \leq \min\{c_1,d_2\}\right\}$;

\item $\Rcd^{w_0} = \left\{w_0 = \left[ \begin{matrix} 0 & 0 & 1\\ 0 & 1 & 0 \\ 1 & 0 & 0 \end{matrix} \right]\right\}$.
\end{enumerate}
\end{proposition}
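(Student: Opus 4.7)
The plan is to use the Bruhat decomposition of $\mathrm{GL}(3,\f)$, where $\f = \R/\p$, to partition $K$ into six pieces indexed by the Weyl group $W$, and then within each piece to identify canonical representatives for $C_\trip{c}$-$C_\trip{d}$ double cosets. Since $B \subseteq C_\trip{c}$ for every $\trip{c} \in \T$, the reduction $K \to \mathrm{GL}(3,\f)$ is compatible with both sides of the action, so pulling back the Bruhat cells of $\mathrm{GL}(3,\f)$ yields a partition $K = \sqcup_{w \in W} K^w$ that descends to $C_\trip{c} \backslash K / C_\trip{d} = \sqcup_w C_\trip{c} \backslash K^w / C_\trip{d}$.

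Within each piece I would pick a canonical form adapted to $w$. For $w=1$, every $k \in K^1$ admits a factorization $k = u \cdot b$ with $u$ lower unipotent and $b \in B$; absorbing $b$ into $C_\trip{d}$ on the right reduces one to lower unipotent representatives, which can then be rewritten in the form $t_{\trip{a},x}$ of case (i) by factoring each lower triangular entry as a power of $\pi$ times a unit and using the diagonal torus action to absorb two of the three units. For $w \in \{s_1, s_2\}$ one multiplies by the appropriate permutation matrix and reaches a form with only two free lower entries, giving the two-parameter families of cases (ii) and (iii). For $s_1s_2$ and $s_2s_1$ only one lower entry remains free, and for $w_0$ the matrix is already rigid. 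The inequalities on $\alpha$, $\beta$ and $\trip{a}$ in each case come from requiring that the remaining entries cannot be further cleared by $C_\trip{c}$ on the left or $C_\trip{d}$ on the right.

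The next step is to decide when two such representatives give the same double coset. Writing $g_1 r g_2 = r'$ with $g_1 \in C_\trip{c}$, $g_2 \in C_\trip{d}$ and $r, r'$ in the same normal form, one extracts the constraints on the parameters by expanding all nine matrix entries and tracking the valuations. This produces the set $\Tcd$ (a compatibility condition between $\trip{a}$ and the conductors $c_i, d_i$), the set $\Xacd$ (the residual freedom in $x$), and the integer $\acd$ (the depth to which $x$ is determined). The piecewise form of $\Wcd$ and the exceptions in $\Tcd$ appear because when some $c_i$ or $d_i$ vanishes, the subgroups $C_\trip{c}, C_\trip{d}$ become large enough to absorb entire Bruhat cells into the identity cell, fusing them with the $w=1$ representatives.

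The main obstacle is the subcase $a_1 + a_2 = a_3$ in case (i). Here a direct calculation shows that $t_{\trip{a},x}$ factors as a product of two elementary lower unipotent matrices whose exponents add to $a_3$, and these factors can be partially interchanged using diagonal conjugations within $C_\trip{c}$ and $C_\trip{d}$ precisely when $x$ is close to $1$. Consequently, the residue class of $x$ is refined: when $\val(x-1) = i > 0$, the equivalence is governed not by $\p^{\acd}$ alone but by a larger modulus depending on both $i$ and $\acd'$. Isolating this refined invariant, and verifying that no further subtlety intrudes, is the technical heart of the argument.
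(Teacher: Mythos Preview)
The paper does not prove this proposition; immediately before the statement it says ``The following theorem is proven in \cite{CNu}'' and gives no argument here. So there is nothing in this paper to compare your proposal against.

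That said, your outline is the natural strategy and matches the shape of the result: use the Bruhat decomposition of $\mathrm{GL}(3,\R/\p)$ pulled back to $K$ via the Iwahori $B$, reduce each Bruhat cell to a normal form modulo $B$ on the right, and then track the residual $C_\trip{c}$-$C_\trip{d}$ action on the lower-triangular parameters to obtain the constraints defining $\Tcd$, $\Xacd$, and $\Wcd$. Your identification of the $a_1+a_2=a_3$ subcase as the delicate point is correct: this is exactly where the paper's definition of $\Xacd$ bifurcates and where both $\acd$ and $\acd'$ enter. One point to be careful about in an actual proof: your phrase ``the inequalities \ldots come from requiring that the remaining entries cannot be further cleared'' hides the real work, since distinctness of the representatives (not just exhaustion) requires checking that no nontrivial $(g,g') \in C_\trip{c}\times C_\trip{d}$ carries one normal form to another, and this is a genuine matrix computation for each $w$, not just a valuation count. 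Whether this matches the proof in \cite{CNu} cannot be determined from the present paper.
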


\section{Determination of the set of double cosets supporting intertwining operators} \label{S:support}

To understand the space of
intertwining operators of the
finite-dimensional representations
$U_\trip{c}$ given in Definition~\ref{D:Uc}, we construct bases for
the spaces $\HH(\chi_\trip{c},\chi_\trip{d})$.
That is,  for any $\trip{c}, \trip{d} \in \Ttm$
we must identify among the double cosets enumerated in Proposition~\ref{P:doubleCccosets} those which support   intertwining operators of $U_\trip{c}$ with $U_\trip{d}$.  Denote the subset of these cosets by
$\Rsuppcd \subseteq \Rcd$, and write $\mathcal{I}(U_\trip{c},U_\trip{d}) = \dim \HH(\chi_\trip{c}, \chi_\trip{d}) = \vert \Rsuppcd \vert$.  (Note that in the case that $\chi = \triv$, which we continue to exclude, $\Rcd = \Rsuppcd$ and there is nothing to show.)

If $\trip{a} \in \T^{1}$ then define $\trip{a}^{\op} = (a_3-a_2, a_3-a_1, (a_3-a_1)+(a_3-a_2))$.

\begin{theorem} \label{T:supp}
Let $\trip{c},\trip{d} \in \Ttm$, with $\trip{m} \succ (0,0,0)$ as before.
Then a set of representatives for double cosets in $C_\trip{c}\backslash K /C_\trip{d}$ supporting elements of $\HH(\chi_\trip{c},\chi_\trip{d})$
is
$$
\Rsuppcd = \bigcup_{w \in\Wcd} \Rsuppcd^w
$$
where the subsets $\Rsuppcd^w \subseteq \Rcd^w$ are defined as follows.
\begin{enumerate}[(i)]
\item $\Rsuppcd^1$ is the set of all $t_{\trip{a},x}$ with $\trip{a} \in \Tcd$
and $x\in \Xacd$ such that
 one of the following holds:
\begin{enumerate}[(1)]
\item $a_1 \geq M$ and $a_2 \geq N$; or
\item $a_1 < M$ and $a_2 \geq N$ and:
\begin{enumerate}[(a)]
\item $a_1+a_2 < a_3$ and $M \leq \min\{ \trip{c}-\trip{a}, \trip{d}-\trip{a},  c_2+a_1-a_3, d_1+a_2-a_3\}$; or
\item $a_1+a_2 > a_3$ and $M \leq \min\{ \trip{c}-\trip{a}^{\op}, \trip{d}-\trip{a}^{\op}\}$; or
\item $a_1+a_2 = a_3$ and $M \leq \min\{ \trip{c}-\trip{a}, \trip{d}-\trip{a}\}$
and $\val(x-1) \leq \trip{a}(\trip{c,d})' - M$;
\end{enumerate}
or
\item $a_1 \geq N$ and $a_2 < N$ and the same conditions (a),(b),(c) with $M$ replaced by $N$;
\end{enumerate}
\item $\Rsuppcd^{s_1}$ is the set of all  $s_1^{(\alpha,\beta)}$ such that
\begin{eqnarray*}
N \leq& \alpha &\leq \min\{d_2,c_3\}-M,\\
N \leq& \beta  & \leq \min\{c_2,d_3\}-M, \; \textrm{and} \\
M-c_1 \leq &\beta-\alpha &\leq d_1-M;
\end{eqnarray*}
\item $\Rsuppcd^{s_2}$ is the set of all  $s_2^{(\alpha,\beta)}$
such that
\begin{eqnarray*}
N \leq &\alpha& \leq \min\{d_1,c_3\}-N, \\
N \leq &\beta& \leq \min\{c_1,d_3\}-N, \; \textrm{and}\\
N-c_2 \leq &\beta-\alpha& \leq d_2-N;
\end{eqnarray*}
\item 
and $\Rsuppcd^w = \emptyset$ for all other $w \in \Wcd$ and for any $w \notin \Wcd$.
\end{enumerate}
\end{theorem}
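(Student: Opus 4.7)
The key criterion is the standard Mackey-type condition: a nonzero function in $\HH(\chi_\trip{c},\chi_\trip{d})$ supported on the double coset $C_\trip{c} g C_\trip{d}$ exists if and only if
\[
\chi_\trip{c}(h) = \chi_\trip{d}(g^{-1}hg)
\]
for every $h$ in the stabilizer $S_g := C_\trip{c} \cap g C_\trip{d} g^{-1}$. My plan is to run through each double coset representative $g$ from Proposition~\ref{P:doubleCccosets} and, for each one, (a) compute $S_g$ explicitly by writing $h = (h_{ij}) \in C_\trip{c}$ and demanding that the off-diagonal entries of $g^{-1}hg$ satisfy the valuation conditions defining $C_\trip{d}$, and then (b) extract the diagonal entries $(g^{-1}hg)_{ii}$, substitute into the character identity $\chi_2(h_{22})\chi_3(h_{33}) = \chi_2((g^{-1}hg)_{22})\chi_3((g^{-1}hg)_{33})$, and translate the requirement that this holds on all of $S_g$ into inequalities involving the conductors $M$ and $N$.

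For the Iwahori representatives $t_{\trip{a},x}$ (case $w=1$), direct computation shows that $g^{-1}$ has entries involving $\pi^{a_1}$, $\pi^{a_2}$, and $\pi^{a_1+a_2} - x\pi^{a_3}$. The trichotomy $a_1+a_2 < a_3$, $> a_3$, $= a_3$ governs the valuation of the lower-left entry of $g^{-1}$ and hence the shape of the diagonal perturbations that $h$ undergoes; this produces the three subcases (a), (b), (c) of parts (2) and (3). When $a_1 \geq M$ and $a_2 \geq N$ (case (1)) the perturbations are automatically absorbed by $\ker(\chi_2)$ and $\ker(\chi_3)$. In the remaining cases, demanding that each perturbation of $h_{22}$ (respectively $h_{33}$) lie in $\ker(\chi_2)$ (respectively $\ker(\chi_3)$) forces precisely the displayed bounds. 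The subcase $a_1+a_2=a_3$ is delicate because $x$ enters as a leading coefficient in the perturbation of $h_{33}$, so character triviality depends on $\val(x-1)$, matching the cut-off $\acd' - M$ (or $\acd' - N$) built into the definition of $\Xacd$.

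For $w = s_i$ ($i=1,2$), conjugation swaps the $i$th and $(i+1)$th diagonal positions, so the character comparison boils down to a constraint involving $\cond(\chi_i \chi_{i+1}^{-1})$, which equals $M$ for $i=1$ and $N$ for $i=2$ under our normalization. Combined with the stabilizer conditions on the entries $\alpha, \beta$, this yields the ranges in (ii) and (iii). For $w \in \{s_1s_2, s_2s_1, w_0\}$, the conjugation mixes the first and third diagonal positions, and the character identity becomes a condition forcing $\cond(\chi_1\chi_3^{-1}) = N$ to vanish on all of $S_g$; since $N>0$ and stabilizer entries vary freely modulo small enough powers of $\p$ for $\trip{c}, \trip{d} \in \Ttm$, this fails identically, giving the empty sets in (iv).

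The main obstacle is the bookkeeping in case $w=1$, especially the borderline subcase $a_1+a_2 = a_3$. There, $x$ is only determined up to its class in the quotient $\Xacd$, and one must verify that the inequality $\val(x-1) \leq \acd' - M$ (or $\acd' - N$) is both well-defined on $\Xacd$ and exactly sharp, i.e.\ that the equivalence relation defining $\Xacd$ is precisely compatible with the character-triviality condition arising from the perturbation of the diagonal.
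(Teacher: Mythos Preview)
Your approach is essentially the same as the paper's: both apply the Mackey criterion coset-by-coset, write out the conjugation (the paper phrases this via ``coset pairs'' $(g,g')$ with $gh=hg'$, which is your stabilizer condition in different notation), and split the $t_{\trip{a},x}$ analysis into the same trichotomy on $a_1+a_2$ versus $a_3$. The only notable difference is that the paper dispatches $w\in\{s_1s_2,s_2s_1,w_0\}$ more concretely by exhibiting a single diagonal coset pair $g=\diag(b,1,1)$, $g'=\diag(1,1,b)$ with $\chi_3(b)\neq 1$, rather than arguing abstractly about freely varying stabilizer entries.
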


\begin{proof}
Let us first show that none of the cosets represented by
elements of  $\Rcd^{s_1s_2} \cup \Rcd^{s_2s_1} \cup \Rcd^{w_0}$
can support  intertwining operators.
Choose an element $b \in \R^\times$ such that $\chi_3(b) \neq 1$.
Set $g = \diag(b,1,1)$ and $g' = \diag(1,1,b)$; these are elements of $C_\trip{c}$ and $C_\trip{d}$ for
any $\trip{c}, \trip{d} \in \Ttm$.  One verifies that
  $g s_1s_2^{(\alpha)} = s_1s_2^{(\alpha)}g'$,
$g w_0=w_0 g'$ and $s_2s_1^{(\alpha)}g = g's_2s_1^{(\alpha)}$, but that $\chi(g) \neq \chi(g')$.
Consequently none of these representatives are in $\Rsuppcd$.

From now on, let us adopt the notational
convention that if $g = (g_{ij}) \in C_\trip{c}$
then $g_{21} = \gamma_{21}\pi^{c_1}$, $g_{32} = \gamma_{32}\pi^{c_2}$
and $g_{31} = \gamma_{31}\pi^{c_3}$; so $g' \in C_\trip{d}$
would have $g_{21}' = \gamma_{21}'\pi^{d_1}$, and so forth.
Moreover, given a coset representative $h \in \Rcd$ and a pair
of elements $g \in C_\trip{c}$ and $g' \in C_\trip{d}$ such
that $g h = hg'$, we will call $(g,g')$ a \emph{coset pair}.

Suppose now that $(g,g') \in C_\trip{c} \times C_\trip{d}$ are
a coset pair for the representative $s_1^{(\alpha,\beta)}$.
We determine directly that the matrix coefficients
of $g$ and $g'$ satisfy
\begin{eqnarray} \label{E:s1equations}
 \notag g_{22} &=& g_{33} -g_{23}\pi^\beta - \gamma_{21}'\pi^{d_1+\alpha-\beta}+\gamma_{32}\pi^{c_2-\beta} - \gamma_{31}'\pi^{d_3-\beta},\\
g_{22}' &=& g_{33} -g_{23}\pi^\beta  -\gamma_{21}\pi^{c_1+\beta-\alpha} - \gamma_{32}'\pi^{d_2-\alpha} + \gamma_{31}\pi^{c_3-\alpha}, \\
\notag g_{33}' &=& g_{33}  - g_{23}\pi^\beta - g_{23}'\pi^\alpha,
\end{eqnarray}
with the remaining coefficients given by
\begin{equation*}
\begin{array}{rclrcl}
g_{11} &=&  g_{22}' - g_{23}'\pi^{\alpha} \quad & g_{11}' &=& g_{22} + g_{23}\pi^\beta \\
g_{12} &=&  \gamma_{21}'\pi^{d_1} - g_{23}'\pi^\beta \quad & g_{12}' &=& g_{23}\pi^\alpha + \gamma_{21}\pi^{c_1} \\
 g_{13} &=& g_{23}' \quad & g_{13}' &=& g_{23}.
\end{array}
\end{equation*}
This allows us to compare
\begin{eqnarray*}
\chi_\trip{c}(g) &=&\chi_2(g_{22})\chi_3(g_{33})\\
         &=& \chi_2(g_{33} -g_{23}\pi^\beta - \gamma_{21}'\pi^{d_1+\alpha-\beta}+\gamma_{32}\pi^{c_2-\beta} - \gamma_{31}'\pi^{d_3-\beta}) \chi_3(g_{33})
\end{eqnarray*}
with
\begin{eqnarray*}
\chi_{\trip{d}}(g') &=& \chi_2(g_{22}')\chi_3(g_{33}') \\
&=&\chi_2(g_{33} -g_{23}\pi^\beta  -\gamma_{21}\pi^{c_1+\beta-\alpha} - \gamma_{32}'\pi^{d_2-\alpha} + \gamma_{31}\pi^{c_3-\alpha})\cdot \\
&& \chi_3(g_{33}  - g_{23}\pi^\beta - g_{23}'\pi^\alpha).
\end{eqnarray*}
It follows that whenever  $M \leq \min\{c_1 - \alpha+\beta, d_1+\alpha-\beta, c_2-\beta, d_2-\alpha, c_3-\alpha, d_3-\beta\}$
and $N \leq \min\{ \alpha, \beta\}$, then $\chi_\trip{c}(g) = \chi_{\trip{d}}(g')$, and so
$s_1^{(\alpha,\beta)} \in \Rsuppcd$.
Conversely, when these inequalities are not satisfied, and additionally $\alpha, \beta \geq 1$,
then we can use the relations above to construct a coset pair $(g,g')$ on which  the characters do not agree.
This proves part (ii); the
proof of part (iii) is analogous and is omitted.

To prove part (i) of the theorem, suppose $t_{\trip{a},x} \in \Rcd^1$ and let
$(g,g') \in C_\trip{c}\times C_\trip{d}$ be a coset pair such that $g t_{\trip{a},x}  = t_{\trip{a},x} g'$.
To simplify notation, set $r_x = \pi^{a_1+a_2}-x\pi^{a_3}$.
One calculates directly that the matrix coefficients of $g$ and $g'$ satisfy the
relation
\begin{align} \label{E:taxrelation}
(g_{12}\pi^{a_1}+g_{13}\pi^{a_1+a_2} -g_{23}\pi^{a_2} )xr_x  =&
- \gamma_{21}x\pi^{c_1+a_2} - \gamma_{21}'r_x\pi^{d_1+a_2-a_3} \\
\notag & + \gamma_{32}r_x\pi^{a_1+c_2-a_3}
+ \gamma_{32}'x\pi^{a_1+d_2} \\
\notag & +\gamma_{31}\pi^{c_3-a_3+a_1+a_2}
 -\gamma_{31}'\pi^{d_3-a_3+a_1+a_2}
\end{align}
and all other matrix coefficients are determined by the equations
\begin{eqnarray*}
g_{11}' &=& g_{22} + g_{23}x\pi^{a_3-a_1} + \gamma_{21}\pi^{c_1-a_1} - \gamma_{21}'\pi^{d_1-a_1}\\
g_{11} &=& g_{11}' -g_{12}\pi^{a_1} - g_{13}x\pi^{a_3} \\
g_{22}' &=& g_{22} -g_{12}\pi^{a_1} - g_{13}\pi^{a_1+a_2} + g_{23}\pi^{a_2}\\
g_{33}' &=& g_{22} - g_{12}r_x\pi^{-a_2} - \gamma_{32}\pi^{c_2-a_2} + \gamma_{32}'\pi^{d_2-a_2}\\
g_{33} &=& g_{33}' -g_{13}r_x + g_{23}\pi^{a_2},
\end{eqnarray*}
together with $g_{12}' = g_{12} +g_{13}\pi^{a_2}$, $g_{13}' = g_{13}$ and $g_{23}' = g_{23} - g_{13}\pi^{a_1}$.
Note that in this case, as opposed to the one for $s_1^{(\alpha,\beta)}$ above, although any solution (with coefficients in $\R$) of \eqref{E:taxrelation}
gives a pair of matrices $(g,g')$ satisfying the relation $g t_{\trip{a},x}  = t_{\trip{a},x} g'$, it 
must be additionally verified that $g$ and $g'$ are invertible in $K$.

Now, given a coset pair $(g,g')$ we have
\begin{equation} \label{E:chic}
\chi_\trip{c}(g) = \chi_2(g_{22}) \chi_3(g_{33}) = \chi_2(g_{22})\chi_3(g_{33}' -g_{13}r_x + g_{23}\pi^{a_2})
\end{equation}
while
\begin{equation} \label{E:chicp}
\chi_\trip{d}(g') = \chi_2(g_{22} -g_{12}\pi^{a_1} - g_{13}\pi^{a_1+a_2} + g_{23}\pi^{a_2})\chi_3(g_{33}').
\end{equation}
Hence these characters agree
whenever  $a_1 \geq M$ and $a_2 \geq N$,
proving part (i)(1).

Now suppose 
 $a_1$ and $a_2$ are both less than $N$.
Choose a pair $(g_{12},g_{23}) \in \R\times \R$ of minimum
valuation satisfying
$g_{23}\pi^{a_2} = g_{12}\pi^{a_1}$, set $g_{13} = \gamma_{21} = \gamma_{21}' = \gamma_{31} = \gamma_{31}' = \gamma_{32} = \gamma_{32}' = 0$
and set $g_{22} = 1$.  These are easily seen to define a coset pair
$(g,g') \in C_\trip{c} \times C_\trip{d}$.
Since
$\val(g_{12}\pi^{a_1})=\val(g_{23}\pi^{a_2}) = \max\{a_1,a_2\} < N = \cond(\chi_3)$,
we have $\chi_\trip{c}(g) \neq \chi_{\trip{d}}(g')$  and it follows that
 $t_{\trip{a},x} \notin \Rsuppcd^1$.

There are exactly two cases left to consider:
when $a_1 < M$ and $a_2 \geq N$, or when $a_1 \geq N$ and $a_2 < N$.
Comparing \eqref{E:chic} and \eqref{E:chicp},
and noting that
$\max\{a_1, a_2\} \leq \min\{a_3, \val(r_x)\}$, we
deduce that
\begin{enumerate}[(A)]
\item if  $a_1 < M$ and  $a_2 \geq N$, then
$t_{\trip{a},x} \in \Rsuppcd^1$ if and only if $\val(g_{12}\pi^{a_1}) \geq M$
for all coset pairs $(g,g')$; and
\item if $a_1 \geq N$ and $a_2 < N$, then
$t_{\trip{a},x} \in \Rsuppcd^1$ if and only if
$\val(g_{23}\pi^{a_2}) \geq N$ for all coset pairs $(g,g')$.
\end{enumerate}
Consider case (A), that is, assume that
$a_1 < M$ and $a_2 \geq N$.

If $\val(g_{12}\pi^{a_1}) \geq a_2 \geq N$
then we are done; otherwise, the term with least valuation on the left
hand side of \eqref{E:taxrelation} is $g_{12}\pi^{a_1}xr_x$.  Comparing
with the right hand side, we deduce
$\val(g_{12}\pi^{a_1}) + \val(r_x) \geq \alpha$ where
\begin{eqnarray*}
\alpha &=&  \min\{c_1+a_2, d_1+a_2-a_3+\val(r_x), a_1+c_2-a_3+\val(r_x), a_1+d_2,\\
\notag && \quad \quad c_3-a_3+a_1+a_2, d_3-a_3+a_1+a_2\}.
\end{eqnarray*}
It follows that if $\alpha\geq M + \val(r_x)$, then $t_{\trip{a},x} \in \Rsuppcd$
by (A) above.  Restating this condition
in the three cases $a_1+a_2<a_3$, $a_1+a_2>a_3$ and $a_1+a_2=a_3$ yields the
conditions described in part (i)(2)(a,b,c) of the theorem.

Conversely, suppose $\alpha < M+\val(r_x)$ and set $g_{13}=g_{23}=0$.  Choose a term of
least possible valuation on the right hand side of \eqref{E:taxrelation}; set
its coefficient (either $\gamma_{ij}$ or $\gamma_{ij}'$, for some $i>j$)
to be $\pi^{a_1-\alpha}$ if $\alpha<a_1+\val(r_x)$ and $1$ otherwise.
Then set the remaining coefficients of the right hand side of \eqref{E:taxrelation}
equal to zero and solve for $g_{12}$, which is now necessarily in $\R^\times$.
Take $g_{22}=1$ and solve for the remaining coefficients.  This results in
a coset pair $(g,g') \in C_\trip{c} \times C_\trip{d}$ such that $\val(g_{12}\pi^{a_1}) < M$, so
by (A) we conclude $t_{\trip{a},x} \notin \Rsuppcd$, as required.

A similar argument establishes condition (i)(3) of the Theorem, following
case (B),
above.
\end{proof}

Let us conclude this section by deriving some
consequences of Theorem~\ref{T:supp}.  The first, which
is immediate, is a convenient restatement of the theorem
in a special case.

\begin{corollary} \label{C:nnnsupp}
Set $\trip{c}=(n,n,n)$ for $n \geq N$.
The space of intertwining operators
of $U_\trip{c} = V_\chi^{K_n}$ with itself has basis parametrized by
$$
\Rsupp_{n} = \bigcup_{w \in W} \Rsupp_n^w
$$
where
\begin{enumerate}[(i)]
\item $\Rsupp_n^1$ is the set of all $t_{\trip{a},x}$ such that $1 \leq a_1, a_2 \leq  a_3 \leq n$, $x \in \mathtt{X}^\trip{a}_{\trip{c,c}}$ and one of conditions (1), (2) or (3) is met:
\begin{enumerate}[(1)]
\item $\trip{a} \succeq \trip{m}$; or
\item $a_1<M$ and $a_2\geq N$ and:
\begin{enumerate}[(a)]
\item $a_1+a_2 < a_3 \leq n-M$, or
\item $a_1+a_2 > a_3$ and $a_1+a_2 \geq M-n + 2a_3$, or
\item $a_1+a_2 = a_3 \leq n-M$ and $\val(r_x) \leq n-M$; or
\end{enumerate}
\item $a_1 \geq N$, $a_2 < N$ and the same conditions (a),(b),(c), with $M$ replaced by $N$, are satisfied;
\end{enumerate}
\item $\Rsupp_n^{s_1} = \{ s_1^{(\alpha,\beta)} \colon N \leq \alpha, \beta \leq n-M\}$;
\item $\Rsupp_n^{s_2} = \{ s_2^{(\alpha,\beta)} \colon N \leq \alpha, \beta \leq n-N\}$;
\item $\Rsupp_n^{s_1s_2} = \Rsupp_n^{s_2s_1} = \Rsupp_n^{w_0} = \emptyset$.
\end{enumerate}
\end{corollary}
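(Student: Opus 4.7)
The plan is to obtain this corollary by direct specialization of Theorem~\ref{T:supp} to the case $\trip{c}=\trip{d}=(n,n,n)$ with $n\geq N$, so the proof is essentially a bookkeeping exercise. The only real work is to verify that each of the defining quantities from Section~\ref{S:doublecosets} and each of the inequalities in the theorem simplify to the stated form.

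First I would observe that since $n\geq N\geq 1$ we have $(n,n,n)\succeq (1,1,1)$, hence $\Wcc = W$; and that $\Tcc$ reduces to $\{\trip{a}\in\T^{1} : a_3\leq n\}$ because the constraints $a_3\leq a_1+n$ and $a_3\leq n+a_2$ are automatic once $a_3\leq n$. Parts (ii), (iii) and (iv) then follow at once: the upper bounds in Theorem~\ref{T:supp}(ii),(iii) collapse to $n-M$ and $n-N$ respectively, and the side conditions $M-c_1\leq \beta-\alpha\leq d_1-M$ (resp.\ their $s_2$ analogue) are implied by the range constraints on $\alpha$ and $\beta$, so they can be dropped.

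For part (i), I would walk through the three cases of Theorem~\ref{T:supp}(i) in turn. In case (1), the hypothesis $a_1\geq M$ and $a_2\geq N$ forces $a_3\geq a_2\geq N$, which is exactly $\trip{a}\succeq\trip{m}$. In cases (2)(a) and (3)(a), the minimum appearing in the theorem reduces to $n-a_3$, since $n-a_3\leq n-a_i$ and $n-a_3\leq n+a_1-a_3$, $n-a_3\leq n+a_2-a_3$, giving the clean condition $a_3\leq n-M$ (resp.\ $a_3\leq n-N$). In (2)(b) and (3)(b), using $\trip{a}^{\op} = (a_3-a_2,a_3-a_1,2a_3-a_1-a_2)$, the third coordinate yields the smallest value of $n-(\trip{a}^{\op})_i$ because $2a_3-a_1-a_2\geq a_3-a_i$; this converts the bound into $a_1+a_2\geq M+2a_3-n$.

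The subtlest piece is cases (2)(c) and (3)(c). Here I would compute $\acc' = \max\{0,\min\{n-a_3,n-a_3,n-a_1,n-a_2\}\} = n-a_3$ (since $a_3\geq a_1,a_2$), so the condition $\val(x-1)\leq \acc'-M$ becomes $\val(x-1)\leq n-a_3-M$. Since $a_1+a_2=a_3$ we have $r_x=\pi^{a_3}-x\pi^{a_3}=-\pi^{a_3}(x-1)$, whence $\val(r_x)=a_3+\val(x-1)$; the inequality then rewrites as $\val(r_x)\leq n-M$, matching the corollary. I expect this last translation between $\val(x-1)$ and $\val(r_x)$ to be the only step requiring care; everything else is a routine specialization of inequalities, and nothing beyond the statement of Theorem~\ref{T:supp} is needed.
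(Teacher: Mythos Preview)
Your proposal is correct and follows exactly the approach the paper takes: the paper simply states that this corollary ``is immediate'' as ``a convenient restatement of the theorem in a special case,'' and your write-up carries out precisely that specialization of Theorem~\ref{T:supp} to $\trip{c}=\trip{d}=(n,n,n)$. The only point you might make more explicit is that the first clause of condition (2)(c), namely $M\leq\min\{\trip{c}-\trip{a}\}$, reduces to $a_3\leq n-M$ by the same reasoning you used for (2)(a); otherwise the verification is complete.
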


Our second corollary will be relevant for the purposes of
calculating $\I(V_\trip{c}, V_\trip{d})$ in Section~\ref{S:counting}.

\begin{corollary} \label{C:consistency}
Suppose that $\trip{c},\trip{d},\trip{c}',\trip{d}' \in \Ttm$
with $\trip{c} \preceq \trip{c}'$ and $\trip{d} \preceq \trip{d}'$.
Then $\Rsuppcd \subseteq \Rsupp_\trip{c',d'}$.
\end{corollary}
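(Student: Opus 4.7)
The plan is to deduce the corollary from a conceptual inclusion of intertwining spaces, together with a short monotonicity check to match canonical double coset representatives.

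The key observation is that $C_{\trip{c}'}\subseteq C_\trip{c}$ and $C_{\trip{d}'}\subseteq C_\trip{d}$, while the characters $\chi_{\trip{c}'}, \chi_{\trip{d}'}$ are merely the restrictions of $\chi_\trip{c}, \chi_\trip{d}$ to the smaller subgroups. Consequently any $f\in\HH(\chi_\trip{c},\chi_\trip{d})$ already satisfies the weaker equivariance condition defining $\HH(\chi_{\trip{c}'},\chi_{\trip{d}'})$, so there is a natural inclusion $\HH(\chi_\trip{c},\chi_\trip{d}) \subseteq \HH(\chi_{\trip{c}'},\chi_{\trip{d}'})$. If $h \in \Rsuppcd$ then some basis element $f_h \in \HH(\chi_\trip{c},\chi_\trip{d})$, supported on $C_\trip{c} h C_\trip{d}$, satisfies $f_h(h)\neq 0$. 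Viewed in $\HH(\chi_{\trip{c}'},\chi_{\trip{d}'})$, the restriction of $f_h$ to the double coset $C_{\trip{c}'} h C_{\trip{d}'}$ is still a nonzero element supported on a single $C_{\trip{c}'},C_{\trip{d}'}$-double coset, showing that this double coset supports an intertwining operator for the pair $(U_{\trip{c}'}, U_{\trip{d}'})$.

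What remains is to verify that the matrix $h$ itself appears in the canonical list $\Rsupp_{\trip{c}',\trip{d}'}$ defined via Proposition \ref{P:doubleCccosets}. For this I would check a short list of monotonicity facts: $\Wcd \subseteq W_{\trip{c}',\trip{d}'}$ by case analysis on the hypotheses cutting out each $\Wcd$; $\Tcd \subseteq \T_{\trip{c}',\trip{d}'}$ because the defining inequalities $\trip{a}\preceq\underline{\trip{c}}$, $\trip{a}\preceq\underline{\trip{d}}$, and $a_3\leq\min\{a_1+\underline{c}_2,\underline{d}_1+a_2\}$ can only relax; $\acd \leq \trip{a}(\trip{c}',\trip{d}')$ and $\acd'\leq\trip{a}(\trip{c}',\trip{d}')'$ because the minima in their definitions involve only quantities non-decreasing in $\trip{c},\trip{d}$; and analogous monotonicity for the bounds on $\alpha,\beta$ defining $\Rcd^{s_i}$. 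With these, every matrix representative in $\Rcd^w$ can (with consistent choices of lifts $x\in\R$) be read as an element of $\mathtt{R}_{\trip{c}',\trip{d}'}^w$ for the same $w$, and the previous paragraph identifies it as belonging to $\Rsupp_{\trip{c}',\trip{d}'}^w$.

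I do not expect any serious obstacle: the argument is essentially formal. The one point requiring mild care is the valuation-class structure of $\Xacd$ appearing in $\Rcd^1$, where one must observe that $\mathtt{X}^{\trip{a}}_{\trip{c}',\trip{d}'}$ is a refinement of $\Xacd$ (because $\acd$ and $\acd'$ both grow), so the chosen lift $x\in\R$ continues to determine a well-defined representative in the new index set.
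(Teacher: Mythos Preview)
Your argument is correct. The paper's own proof is very terse: it observes (i) that with a consistent choice of lifts for $\Xacd$ one has $\Rcd \subseteq \mathtt{R}_{\trip{c}',\trip{d}'}$, and (ii) that the inequalities in Theorem~\ref{T:supp} defining $\Rsuppcd$ inside $\Rcd$ only loosen as $\trip{c},\trip{d}$ grow. Your second paragraph is exactly step (i), with the same monotonicity checks spelled out in more detail. Where you differ is in step (ii): rather than inspecting the explicit constraints of Theorem~\ref{T:supp} case by case, you invoke the conceptual inclusion $\HH(\chi_\trip{c},\chi_\trip{d}) \hookrightarrow \HH(\chi_{\trip{c}'},\chi_{\trip{d}'})$ coming from $C_{\trip{c}'}\subseteq C_\trip{c}$, $C_{\trip{d}'}\subseteq C_\trip{d}$, and then restrict the basis function $f_h$ to the smaller double coset through $h$. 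This is a cleaner route to the support statement and has the advantage of being independent of the detailed form of Theorem~\ref{T:supp}; the paper's approach, by contrast, is a one-line appeal to that theorem. Either way the representative-matching (your second paragraph, including the refinement of $\Xacd$) is unavoidable, and you handle it correctly.
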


\begin{proof}
By identifying $\Xacd$ with a set of coset representatives from $\R^\times$
in a suitable manner, one easily sees that
$\Rcd \subseteq \mathtt{R}_{\trip{c',d'}}$.  Furthermore, it is
clear
that the list of constraints on elements of $\Rsupp$
in Theorem~\ref{T:supp} can only become less constrictive
as $\trip{c}$ or $\trip{d}$ increase.
\end{proof}

In particular, it makes sense to ask, for a given distinguished double
coset representative $g\in \cup_{\trip{c,d}} \Rcd$,
whether there exist $\trip{c},\trip{d} \in \Ttm$
for which $g \in \Rsuppcd$.

\begin{theorem} \label{T:totalsupp}
The double cosets which
support self-intertwining operators
of $U_\trip{c}$, for some $\trip{c} \in \Ttm$, are
represented by
$$
\Rsupp = \bigcup_{\trip{c},\trip{d} \in \Ttm} \Rsuppcd  = \bigcup_{w \in W} \Rsupp^w
$$
where
\begin{enumerate}[(i)]
\item $\Rsupp^1 = \{ t_{\trip{a},x}  \colon a_3 \geq \max\{a_1,a_2\} \geq N, x \in \R^\times\}$;
\item $\Rsupp^{s_1} = \{ s_1^{(\alpha,\beta)} \colon \alpha, \beta \geq N\}$;
\item $\Rsupp^{s_2} = \{ s_2^{(\alpha,\beta)} \colon \alpha, \beta \geq N\}$; and
\item $\Rsupp^{s_1s_2} = \Rsupp^{s_2s_1} = \Rsupp^{w_0} = \emptyset$.
\end{enumerate}
Moreover, up to identifying $t_{\trip{a},x}$ and $t_{\trip{a},y}$
whenever $x$ and $y$ have the same image in $\Xacd$ for $\trip{c}$,
$\trip{d}$ sufficiently large,
these cosets are all distinct.
\end{theorem}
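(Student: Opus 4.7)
The strategy is a case-by-case analysis over the Weyl group, relying on Theorem~\ref{T:supp} combined with the monotonicity from Corollary~\ref{C:consistency}. The cases $w \in \{s_1s_2, s_2s_1, w_0\}$ are immediate: Theorem~\ref{T:supp}(iv) gives $\Rsuppcd^w = \emptyset$ for every pair $\trip{c}, \trip{d} \in \Ttm$, and so $\Rsupp^w = \emptyset$.

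For $w = s_1$, the constraints of Theorem~\ref{T:supp}(ii) are the intrinsic lower bounds $\alpha, \beta \geq N$ together with upper bounds and a difference bound depending on the components of $\trip{c}$ and $\trip{d}$. Corollary~\ref{C:consistency} implies that enlarging $\trip{c}, \trip{d}$ preserves membership in $\Rsupp$, and one checks directly that each nonintrinsic bound becomes vacuous once $c_1,c_2,c_3,d_1,d_2,d_3$ exceed fixed linear combinations of $\alpha, \beta, M, N$. This identifies $\Rsupp^{s_1}$ as claimed; the argument for $s_2$ is identical in form.

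For the identity coset, I would analyze Theorem~\ref{T:supp}(i) subcase by subcase. The three subcases (1), (2), (3) together impose as their only intrinsic necessary condition that $\max\{a_1,a_2\} \geq N$: (1) handles $a_1 \geq M$ and $a_2 \geq N$, (2) handles $a_1 < M$ and $a_2 \geq N$, and (3) handles $a_1 \geq N$ and $a_2 < N$. Every supplementary inequality in (2)(a,b,c) and (3)(a,b,c) bounds $M$ or $N$ above by a quantity of the form $\trip{c}-\trip{a}$, $a_1 + c_2 - a_3$, or $\acd' - \val(x-1)$, each of which is nondecreasing and unbounded in the components of $\trip{c}, \trip{d}$. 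Hence every $t_{\trip{a},x}$ with $\max\{a_1,a_2\} \geq N$ and $x \in \R^\times$ enters $\Rsuppcd^1$ once $\trip{c}, \trip{d}$ are taken large enough. Conversely, when $\max\{a_1,a_2\} < N$, the construction already performed in the proof of Theorem~\ref{T:supp}(i)---take $g_{12}, g_{23}$ of minimum valuation with $g_{23}\pi^{a_2} = g_{12}\pi^{a_1}$ and all other $\gamma_{ij}, \gamma_{ij}'$ equal to zero---produces a coset pair on which $\chi_\trip{c}$ and $\chi_\trip{d}$ disagree for every $\trip{c}, \trip{d}$, excluding these $t_{\trip{a},x}$ from $\Rsupp^1$.

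The final distinctness claim follows from Proposition~\ref{P:doubleCccosets}: for each fixed $\trip{c}, \trip{d}$ the representatives $t_{\trip{a},x}$ are distinct in $C_\trip{c} \backslash K / C_\trip{d}$ precisely when the pairs $(\trip{a}, x)$ differ in $\Tcd$ or their $x$-components have distinct images in $\Xacd$. As the components of $\trip{c}, \trip{d}$ grow, $\acd$ stabilizes at $\min\{a_1, a_2, a_3-a_1, a_3-a_2\}$ while $\acd'$ tends to infinity, so the equivalence relation on $\R^\times$ induced by $\Xacd$ stabilizes and yields exactly the identification in the statement. The main nuisance, rather than a deep obstacle, is the case-by-case bookkeeping in the identity case---especially subcase (c), where $\val(x-1)$, $\acd'$, and the component bounds all interact---but since every relevant inequality relaxes monotonically in $\trip{c}, \trip{d}$, it is enough to choose one pair $\trip{c}, \trip{d}$ large enough for all bounds to hold simultaneously.
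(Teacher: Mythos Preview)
Your approach is correct and essentially the same as the paper's: both let the triples grow without bound and read off which constraints in Theorem~\ref{T:supp} survive as intrinsic conditions. The only difference is that the paper shortcuts through Corollary~\ref{C:nnnsupp} by specializing to $\trip{c}=\trip{d}=(n,n,n)$, where the formulas $\acc=\min\{a_1,a_2,a_3-a_1,a_3-a_2,n-a_3\}$ and $\acc'=n-a_3$ make the limiting behaviour immediate, whereas you carry the general pair $(\trip{c},\trip{d})$ through the same limit.
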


\begin{proof}
This follows from Corollary~\ref{C:nnnsupp}
by allowing $n$ to grow without bound.  Note that when $\trip{c} = \trip{d} = (n,n,n)$, we have simply
$\acd = \min\{ a_1, a_2, a_3-a_1, a_3-a_2, n-a_3\}$ and $\acd' = n-a_3$.
\end{proof}

\section{Irreducibility} \label{S:counting}

The results of the preceding section allow us to restate
Corollary~\ref{C:intops} in terms of the sets $\Rsuppcd$.
That is, for any $\trip{c},\trip{d}\in \Ttm$, we have
$$
\mathcal{I}(V_{\trip{c}}, V_{\trip{d}})
=
\sum_{I\subseteq \Sset_{\trip{c}},\; J\subseteq \Sset_{\trip{d}}}
(-1)^{|I|+|J|} \;
\vert \Rsupp_{\trip{c}_I,\trip{d}_J} \vert.
$$

The irreducibility of $U_\trip{m}=V_\trip{m}$ is known
from Howe's work~\cite[Theorem 1]{Howe1}.  In this section, we demonstrate
that this extends to many, but not all, of the quotients which are ``extremal'' in the sense that they have few immediate descendants in the poset $\Ttm$.

We retain the notation of the previous sections.

\begin{theorem} \label{T:irred}
For each $n \in \mathbb{Z}$ with $N \leq n \leq N+M$, the
$K$-module $V_{(M,N,n)}$
is irreducible.
\end{theorem}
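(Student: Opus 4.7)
The plan is to reduce the problem to a direct application of Corollary~\ref{C:intops} and then count double cosets using Theorem~\ref{T:supp}. When $n = N$, the triple $\trip{c}$ coincides with $\trip{m}$ and the result is Howe's theorem as already cited, so I concentrate on $N < n \leq N+M$. In that regime, both $\trip{c}_{\{1\}}$ and $\trip{c}_{\{2\}}$ fall outside $\Ttm$ (their first or second coordinate drops below $M$ or $N$), whereas $\trip{c}_{\{3\}} = (M,N,n-1)$ remains inside because $n-1 \geq N$. Hence $\Sset_\trip{c} = \{3\}$, and Corollary~\ref{C:intops} reduces the theorem to the identity
$$|\Rsupp_{\trip{c},\trip{c}}| - |\Rsupp_{\trip{c}_{\{3\}},\trip{c}}| - |\Rsupp_{\trip{c},\trip{c}_{\{3\}}}| + |\Rsupp_{\trip{c}_{\{3\}},\trip{c}_{\{3\}}}| = 1.$$

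I would next use Theorem~\ref{T:supp} to enumerate each of the four $\Rsupp$ sets. Because $M \leq N$, the interval $N \leq \alpha \leq \min\{d_2,c_3\} - M$ defining $\Rsupp^{s_1}$ is empty (the boundary case $M = 0$ being absorbed into $n = N$), and symmetrically $\Rsupp^{s_2} = \emptyset$; the remaining Weyl pieces are empty by Theorem~\ref{T:supp}(iv). Within $\Rsupp^1$ I would eliminate sub-cases (i)(2) and (i)(3) of that theorem: conditions (a) and (c) both impose $M \leq c_1 - a_1 = M - a_1$, which is incompatible with $a_1 \geq 1$; in condition (b) the variant $M \leq c_1 - (a_3-a_2)$ forces $a_3 = a_2$, after which the analogous bound involving $c_2$ forces $a_1 \geq M$, contradicting the hypothesis $a_1 < M$ of case (2). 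A symmetric argument disposes of case (3).

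This leaves only case (i)(1), for which $\trip{a} \succeq \trip{m}$ and hence, since $\trip{a} \preceq \underline{\trip{c}} = (M,N,c_3)$, one has $a_1 = M$ and $a_2 = N$. A direct inspection of \eqref{E:defacd} shows $\acd = 0$ for every such $\trip{a}$, because $c_1 - a_1 = 0$ appears in the minimum; consequently $\Xacd$ contributes only a single representative. The surviving cosets are therefore indexed by the allowed values of $a_3$: since $n \leq M+N$, these range over $\{N, N+1,\ldots, \min(c_3,d_3)\}$, giving $n - N + 1$ cosets for $\Rsupp_{\trip{c},\trip{c}}$ and $n - N$ for each of the three sets obtained by replacing a copy of $\trip{c}$ with $\trip{c}_{\{3\}}$. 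The alternating sum then telescopes to $(n-N+1) - (n-N) - (n-N) + (n-N) = 1$, as required.

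The main obstacle I foresee is the case analysis eliminating Theorem~\ref{T:supp}(i)(2) and (i)(3); in particular sub-case (b) needs separate attention because $\trip{a}^{\op}$ scrambles the coordinates of $\trip{a}$, and the relevant inequalities are not immediately controlled by the natural bounds $a_1 \geq 1$ and $\trip{a} \preceq \underline{\trip{c}}$. Once those sub-cases are dispatched, the counting in case (i)(1) is immediate thanks to the uniform vanishing of $\acd$.
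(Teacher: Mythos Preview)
Your proposal is correct and follows essentially the same route as the paper's proof: identify $\Sset_\trip{c}$, rule out the Weyl pieces $\Rsupp^{s_1}$ and $\Rsupp^{s_2}$, eliminate cases (i)(2) and (i)(3) of Theorem~\ref{T:supp}, and count the surviving triples $\trip{a}=(M,N,m)$ with $|\Xacd|=1$. The only cosmetic difference is that the paper phrases the reduction as ``by induction it suffices to show $|\Rsupp_{\trip{c},\trip{c}}|=n-N+1$,'' whereas you compute all four terms of the alternating sum directly; since the same analysis establishes each of the four counts, the two presentations are equivalent. One small point you leave implicit in the case~(b) argument: the step from $a_3=a_2$ to the bound $c_2-(a_3-a_1)=a_1$ also uses $a_2=N$, which follows from $a_2\geq N$ together with $a_2\leq c_2=N$ coming from $\trip{a}\in\Tcd$.
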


\begin{proof}
Set $\trip{c} = (M,N,n)$.  If $n=N$ then $\Sset_\trip{c}=\emptyset$; otherwise,
$\Sset_\trip{c}$ is a singleton corresponding to the triple $(M,N,n-1)$.
By Corollary~\ref{C:intops}, and induction,
 it thus suffices to show
that $\vert \Rsupp_{\trip{c,c}} \vert = n-N+1$.

If $M=0$ then $\Wcc = \{1, w_0\}$ so $\Rsuppcc^{s_1} = \Rsuppcc^{s_2} = \emptyset$.  
If $M>0$ then
$\min\{c_2,c_3\}-M = N-M<N$ and
$\min\{ c_1,c_3\}-N=M-N<N$ so again
$\Rsuppcc^{s_1} = \Rsuppcc^{s_2} = \emptyset$,
regardless of the value of $n$.  Thus $\Rsuppcc = \Rsuppcc^1$.

Now let $t_{\trip{a},x} \in \Rsuppcc^1$; so one of Theorem~\ref{T:supp}(i)(1), (2) or (3) applies.  If it were (2), then $a_1 < M$ and $a_2 \geq N$
imply that $M>0$ and  $a_2 = N$ so neither case (a) nor case (c) could
apply since $M > c_2-a_2 = 0$.  Were case (b) to apply, then
$M \leq \min\{ \trip{c-a}^\op\}$ would imply that $a_2=a_3 = N$
and so $N-(a_3-a_1) = a_1$ which is not greater than or equal to $M$,
a contradiction.  We similarly deduce that case (3) cannot apply.
This leaves case (1), which consists of the elements
$t_{\trip{a},x}$
with $\trip{a} = (M,N,m)$, $N \leq m \leq n$ and $x \in \Xacc$, each of
which support an intertwining
operator.   For each such $\trip{a}$, we have $\acc = \acc' = 0$, so
in fact  $\vert \Xacc \vert = 1$.
The desired conclusion follows.
\end{proof}

\begin{theorem} \label{T:otherirred1}
$V_{(m,n,n+m)}$ is irreducible for each $m \geq M$ and  $n \geq N$.
\end{theorem}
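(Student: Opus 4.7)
Set $\trip{c} = (m,n,n+m)$ and $\trip{d} = \trip{c}_{\{3\}} = (m,n,n+m-1)$. The plan is to identify $\Sset_\trip{c}$, apply Corollary~\ref{C:intops}, and show the resulting alternating sum equals~$1$. Because $c_3 = c_1+c_2$, the candidates $\trip{c}_{\{1\}}$ and $\trip{c}_{\{2\}}$ violate $c_3 \leq c_1+c_2$ and lie outside $\T$; for $m \geq 1$, $\trip{d} \in \Ttm$ and $\Sset_\trip{c} = \{3\}$. Corollary~\ref{C:intops} then reduces irreducibility to
\[
|\Rsuppcc| - |\Rsupp_{\trip{c},\trip{d}}| - |\Rsupp_{\trip{d},\trip{c}}| + |\Rsupp_{\trip{d},\trip{d}}| = 1.
\]
(When $m = 0$ we also have $M = 0$: either $n = N$, so $V_\trip{c} = U_\trip{m}$ is irreducible by Howe, or the argument below adapts using the exceptional form of $\T_{\trip{c},\trip{d}}$ in Definition~\ref{D:Tcd}.) The off-diagonal Weyl contributions to this sum cancel: since $c_3 \geq c_1, c_2$ and likewise for $\trip{d}$, the bounds defining $\Rsupp^{s_1}$ and $\Rsupp^{s_2}$ in Theorem~\ref{T:supp}(ii,iii) involve only $c_1, c_2, d_1, d_2$, which are shared. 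The remaining Weyl pieces vanish by Theorem~\ref{T:supp}(iv).

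For $\Rsupp^1$, I would exploit that $\Tcc$ differs from the common set $\T_{\trip{c},\trip{d}} = \T_{\trip{d},\trip{c}} = \T_{\trip{d},\trip{d}}$ by exactly the extremal triple $\trip{a}_0 := (m,n,n+m)$, the unique element attaining $a_3 = n+m$. This triple satisfies case~(1) of Theorem~\ref{T:supp}(i) and (by direct computation) $\trip{a}_0(\trip{c},\trip{c}) = \trip{a}_0(\trip{c},\trip{c})' = 0$, so $|\mathtt{X}^{\trip{a}_0}_{\trip{c},\trip{c}}| = 1$ and this triple contributes $+1$ to the alternating sum. It then suffices to verify that each $\trip{a} \in \T_{\trip{c},\trip{d}}$ contributes $0$. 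The only $c_3, d_3$-dependent entries of \eqref{E:defacd} are $c_3-a_3$ and $d_3-a_3$, which enter only through their minimum: $n+m-a_3$ for the pair $(\trip{c},\trip{c})$, and $n+m-1-a_3$ for each of the other three pairs. I claim $n+m-a_3$ is never the strict minimum defining $\trip{a}(\trip{c},\trip{c})$: otherwise $a_1 + n - a_3 \geq n+m-a_3$ and $m + a_2 - a_3 \geq n+m-a_3$ would force $a_1 = m$, $a_2 = n$, and $a_3 \geq n+m$, contradicting $a_3 \leq n+m-1$. By integrality, $\trip{a}(\trip{c},\trip{c}) \leq n+m-1-a_3$, so replacing $n+m-a_3$ by $n+m-1-a_3$ leaves the minimum unchanged. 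An analogous argument for \eqref{E:defacdprime}---relevant only on the stratum $a_1+a_2 = a_3$, where $|\mathtt{X}^\trip{a}_{\trip{c},\trip{d}}|$ actually depends on $\trip{a}(\trip{c},\trip{d})'$---rules out a strict minimum there at $n+m-a_3$ by the same forcing to $\trip{a} = \trip{a}_0$. Hence $|\mathtt{X}^\trip{a}_{\trip{c},\trip{d}}|$ is constant across the four pairs.

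Finally, I would check that the constraints of Theorem~\ref{T:supp}(i)(1)--(3) are themselves insensitive to the choice of pair. Case~(1) has no $\trip{c}, \trip{d}$-dependence. For case~(2a), the inequalities $M \leq m-a_1$ and $M \leq a_1+n-a_3$ sum to $c_3-a_3 \geq 2M$, so $c_3-a_3-1 \geq M$ whenever $M \geq 1$; if instead $M = 0$ the hypothesis $a_1 < M$ is vacuous. A parallel sum identity on $\trip{c}-\trip{a}^{\op}$ disposes of case~(2b), and case~(2c) combines this with the equality of $\trip{a}(\trip{c},\trip{d})'$ above, preserving the bound $\val(x-1) \leq \trip{a}(\trip{c},\trip{d})' - M$. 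Case~(3) is identical with $M$ and $N$ interchanged. Therefore every $\trip{a} \in \T_{\trip{c},\trip{d}}$ contributes $0$, the alternating sum equals the $+1$ from $\trip{a}_0$, and $V_\trip{c}$ is irreducible. The principal obstacle is precisely this last step: verifying case by case that each of the $\trip{c}, \trip{d}$-dependent constraints is implied by the others when $M, N \geq 1$, particularly the $x$-constraint in cases~(2c) and (3c), whose preservation hinges on the equality of $\trip{a}(\trip{c},\trip{d})'$.
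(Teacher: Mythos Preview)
Your approach is correct and essentially matches the paper's: both isolate the single extremal triple $\trip{a}_0=(m,n,m+n)$ and verify that for every $\trip{a}$ with $a_3 \leq m+n-1$ the $c_3,d_3$-dependent terms in $\trip{a}(\cdot,\cdot)$, $\trip{a}(\cdot,\cdot)'$, and the conditions of Theorem~\ref{T:supp}(i) are dominated by other entries of the minimum. The paper streamlines by computing only the two-term difference $\I(U_\trip{c},V_\trip{c}) = |\Rsuppcc| - |\Rsuppcd|$ rather than your full four-term alternating sum for $\I(V_\trip{c},V_\trip{c})$; since $V_\trip{c}$ is a quotient of $U_\trip{c}$, the value $1$ already forces irreducibility, so two of your four terms are unnecessary. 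The key inequality used is the same in both arguments: $(c_1-a_1)+(a_1+c_2-a_3)=c_3-a_3$, so one of the two summands is at most $c_3-a_3-1$.

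One small slip: in your claim that $n+m-a_3$ is never the strict minimum, the conclusion ``and $a_3 \geq n+m$'' does not follow from the two inequalities you wrote. The actual contradiction is that once $a_1=m$ is forced, the term $c_1-a_1=0$ also sits in the minimum, and $0 < n+m-a_3$ since $a_3 \leq n+m-1$; this is what gives your integrality bound $\trip{a}(\trip{c},\trip{c}) \leq n+m-1-a_3$. Also, for $m=0$ the exceptional definition in Section~\ref{S:principalseries} gives $\trip{c}_{\{3\}} = (0,n-1,n-1)$, not $(0,n,n-1)$ as your formula for $\trip{d}$ would produce; the paper handles this case separately (and it is in fact easier, since cases (i)(2) and (i)(3) of Theorem~\ref{T:supp} cannot occur).
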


\begin{proof}
We first consider the case that $\trip{c} = (m,n,m+n)$ with $m \geq 1$.  Then $\Sset_{\trip{c}}$ is a singleton corresponding to
$\trip{d}=\trip{c}_{\{3\}}=(m,n,m+n-1)$.
Hence $\I(U_\trip{c},V_\trip{c}) = \I(U_\trip{c},U_\trip{c}) -\I(U_\trip{c},U_\trip{d})$ and it suffices to show that
 $\vert \Rsupp_{\trip{c,c}} \setminus \Rsuppcd\vert =1$.

First note that since $c_1=d_1$ and $c_2=d_2$, and that both
these are at most $d_3<c_3$, we have
$\Rsupp_{\trip{c,c}}^w = \Rsuppcd^w$ for each $w \in W \setminus \{1\}$.

Next, note that $\T_\trip{c,c} \setminus \Tcd$ consists of the
single element $(m,n,n+m) = \trip{c}$.  Since
$\vert \mathtt{X}^\trip{c}_{\trip{c,c}} \vert = 1$,
there is a unique distinguished double coset of the form $t_{\trip{c},x} \in \mathtt{R}^1_{\trip{c,c}}$; it is clearly in $\Rsupp_{\trip{c,c}}$.
We claim that this is the only element of
$\Rcc^1 \setminus \Rcd^1$.  Namely, let
$\trip{a} \in \Tcd$.
Since $0 \leq d_3-a_3 = (d_1+d_2-1)-a_3 = (d_1+a_2-a_3)+(d_2-a_2)-1$,
it must be true that  $d_3-a_3 \geq \min\{d_1+a_2-a_3, d_2-a_2\}$
so necessarily
$\trip{a(c,c)}=\trip{a(c,d)}$.
If furthermore $a_1+a_2=a_3$,
then $\trip{a(c,c)}'=\trip{a(c,d)}'$ by the same reasoning.
Hence $\mathtt{X}^\trip{a}_\trip{c,c} = \Xacd$ for all such $\trip{a}$,
as claimed.

We next claim that $\Rsupp_{\trip{c,c}}^1 \cap \Rcd^1 = \Rsuppcd^1$.
Namely, given $t_{\trip{a},x} \in \Rsupp_{\trip{c,c}}^1 \cap \Rcd^1$,
since $d_3-a_3 \geq \min\{d_1+a_2-a_3, d_2-a_2\} = \min\{c_1+a_2-a_3, c_2-a_2\}$,
we see that all of the conditions set out in Theorem~\ref{T:supp}(i)
are unchanged in passing from the pair $(\trip{c,c})$ to the
pair $(\trip{c,d})$.  Hence $t_{\trip{a},x} \in \Rsuppcd^1$.

This shows, for the case $m\geq 1$, that  $\Rsupp_{\trip{c,c}}\setminus \Rsuppcd$
is a singleton from which we deduce the irreducibility of $V_\trip{c}$.

When $\trip{c} = (0,n,n)$, we have instead
$\Sset_\trip{c} = \{3\}$ corresponding to $\trip{d} = \trip{c}_{\{3\}} = (0,n-1,n-1)$.  We have $\Rsuppcd = \Rsuppcd^1$, $\Rsupp_{\trip{c,c}} = \Rsupp_{\trip{c,c}}^1$ and neither of the cases (i)(2) or (i)(3) of Theorem~\ref{T:supp} can apply.  It thus follows easily that  $\vert \Rsupp_{\trip{c,c}}\setminus \Rsuppcd \vert =1$ in this case as well.
\end{proof}

\begin{theorem} \label{T:otherirred2}
$V_{(m,n,\max\{n,m\})}$ is irreducible for each $m > M$ and $n > N$.
\end{theorem}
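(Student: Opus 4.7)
I will show that $\I(U_\trip{c}, V_\trip{c}) = 1$, from which irreducibility of $V_\trip{c}$ follows by the standard argument used (implicitly) in Theorems~\ref{T:irred} and~\ref{T:otherirred1}: since $V_\trip{c}$ is a quotient (hence summand) of $U_\trip{c}$, writing $U_\trip{c} = \bigoplus W_i^{m_i}$ and $V_\trip{c} = \bigoplus W_i^{n_i}$ with $n_i \leq m_i$, the identity $\sum m_i n_i = 1$ forces $V_\trip{c}$ to be a single irreducible constituent appearing with multiplicity one in $U_\trip{c}$.

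Writing $\trip{c} = (m, n, \max\{n, m\})$, a direct check of the inequalities defining $\T$ and $\Ttm$ gives $\Sset_\trip{c} = \{1, 2\}$: the triple $\trip{c}_{\{3\}}$ violates $c_3 \geq \max\{c_1, c_2\}$, while $\trip{c}_{\{1\}} = (m-1, n, c_3)$ and $\trip{c}_{\{2\}} = (m, n-1, c_3)$ lie in $\Ttm$ by the hypotheses. The join $\trip{c}_{\{1,2\}}$ is $(m-1, n-1, c_3)$ generically but drops to $(0, n-1, n-1)$ in the degenerate subcase $m = 1$ (forcing $M = 0$, only possible when $n \geq m$). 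Applying Corollary~\ref{C:intops} to $[V_\trip{c}] = \sum_J (-1)^{|J|}[U_{\trip{c}_J}]$, using the inclusions of Corollary~\ref{C:consistency}, and setting $A = \Rsupp_{\trip{c},\trip{c}_{\{1\}}}$ and $B = \Rsupp_{\trip{c},\trip{c}_{\{2\}}}$, the alternating sum rewrites as
\begin{equation*}
\I(U_\trip{c}, V_\trip{c}) = |\Rsuppcc \setminus (A \cup B)| - |(A \cap B) \setminus \Rsupp_{\trip{c},\trip{c}_{\{1,2\}}}|.
\end{equation*}

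In the generic case I would establish both $\Rsuppcc \setminus (A \cup B) = \{t_\trip{c}\}$ and $A \cap B = \Rsupp_{\trip{c},\trip{c}_{\{1,2\}}}$, yielding the value $1$. The coset $t_\trip{c}$ corresponds to $\trip{a} = \trip{c}$, the unique triple in $\Tcc$ with $a_1 = m$ and $a_2 = n$: the bounds $\max\{a_1, a_2\} \leq a_3 \leq c_3 = \max\{m, n\}$ force $a_3 = c_3$, Theorem~\ref{T:supp}(i)(1) holds tautologically, and $\trip{a} \not\preceq \underline{\trip{c}_{\{i\}}}$ for either $i$. For the non-identity Weyl strata, the explicit bounds of Theorem~\ref{T:supp}(ii) show that an element of $\Rsupp^{s_1}_{\trip{c,c}}$ outside $A^{s_1}$ has $\beta - \alpha = m - M$ and outside $B^{s_1}$ has $\alpha = n - M$; combining forces $\beta = n + m - 2M > n - M$ since $m > M$, contradicting the upper bound $\beta \leq n - M$ in $\Rsupp^{s_1}_{\trip{c,c}}$. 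A parallel argument handles $s_2$, using $n > N$ instead of $m > M$. The equality $A \cap B = \Rsupp_{\trip{c},\trip{c}_{\{1,2\}}}$ would follow from $\T_{\trip{c},\trip{c}_{\{1,2\}}} = \T_{\trip{c},\trip{c}_{\{1\}}} \cap \T_{\trip{c},\trip{c}_{\{2\}}}$ together with the observation that each support condition of Theorem~\ref{T:supp}(i) is coordinatewise-monotone in $\trip{d} - \trip{a}$ (and in $\trip{d} - \trip{a}^\op$), so satisfying it for both $\trip{c}_{\{1\}}$ and $\trip{c}_{\{2\}}$ is equivalent to satisfying it for $\trip{c}_{\{1,2\}}$.

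The main obstacle I foresee is the degenerate subcase $m = 1$ and $M = 0$. Here $\underline{\trip{c}_{\{1\}}} = \underline{\trip{c}}$ collapses several $\T$-constraints and, crucially, the definition of $\Wcd$ excludes $s_1$ from both $W_{\trip{c},\trip{c}_{\{1\}}}$ and $W_{\trip{c},\trip{c}_{\{1,2\}}}$. Consequently $t_\trip{c}$ now lies in $A$ (and contributes $0$ to the alternating sum); instead $\Rsuppcc \setminus (A \cup B)$ picks up the two $s_1$-cosets $s_1^{(n, n-1)}$ and $s_1^{(n, n)}$ (excluded from $A$ because $A^{s_1} = \emptyset$, and from $B$ because the upper bound on $\alpha$ in $B^{s_1}$ is $n - 1$), while $(A \cap B) \setminus \Rsupp_{\trip{c},\trip{c}_{\{1,2\}}}$ picks up the single exceptional coset $t_{(1, n-1, n)}$ (excluded from the last set because its third coordinate $n$ exceeds the bound $n - 1$ imposed by $\underline{\trip{c}_{\{1,2\}}}$). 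Verifying directly from Theorem~\ref{T:supp} that these are the only excess contributions gives $2 - 1 = 1$ and completes the proof.
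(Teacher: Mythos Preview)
Your approach is correct and essentially equivalent to the paper's, but uses a different grouping of the alternating sum in Corollary~\ref{C:intops}.  The paper pairs terms as
\[
\mathcal{A}_0=\Rsupp_{\trip{c},\trip{c}}\setminus\Rsupp_{\trip{c},\trip{c}_{\{2\}}},
\qquad
\mathcal{A}_1=\Rsupp_{\trip{c},\trip{c}_{\{1\}}}\setminus\Rsupp_{\trip{c},\trip{c}_{\{1,2\}}},
\]
and shows $|\mathcal{A}_0|-|\mathcal{A}_1|=1$ by exhibiting explicit cancellations between the two sets (for instance, the $s_1$-cosets and the $(2b)$-type identity cosets appear in both with equal multiplicity, while the family $t_{(a_1,n,n),1}$ contributes one more to $\mathcal{A}_0$ than to $\mathcal{A}_1$).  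Your inclusion--exclusion regrouping instead isolates the single element $t_\trip{c}$ directly, at the cost of having to prove the two sharper statements $\Rsuppcc\setminus(A\cup B)=\{t_\trip{c}\}$ and $A\cap B=\Rsupp_{\trip{c},\trip{c}_{\{1,2\}}}$.  Both organizations require the same underlying checks: that $\Xacd$ is unchanged under the relevant decrements of $\trip{d}$, and that each of the support conditions (2a)--(2c), (3a)--(3c) of Theorem~\ref{T:supp} behaves well.  Your sketch carries out the $s_1$- and $s_2$-strata completely but leaves the identity stratum implicit; in practice one must still verify, for example, that a coset satisfying $(2b)$ for $\trip{d}=\trip{c}$ but failing it for both $\trip{c}_{\{1\}}$ and $\trip{c}_{\{2\}}$ forces $c_3-(2a_3-a_1-a_2)<M$, contradicting membership in $\Rsuppcc$.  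Your treatment of the degenerate case $m=1$, $M=0$ matches the paper's exactly, merely recounting the same five exceptional cosets via your partition rather than theirs.
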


\begin{proof}
Suppose first that $m > \max\{M,1\}$ and $n > N$, and
that $\max\{m,n\} = n$.
Then $\trip{c} = (m,n,n)$
and  $\Sset_\trip{c}= \{1,2\}$ with the corresponding
triples
$\trip{c}_{\{1\}}= (m-1,n,n)$,
$\trip{c}_{\{2\}} = (m,n-1,n)$ and
$\trip{c}_{\{1,2\}} = (m-1,n-1,n)$.
We compute the alternating sum
\begin{equation} \label{E:icalc}
\mathcal{I}(U_\trip{c},V_\trip{c})=
\mathcal{I}(U_\trip{c},U_\trip{c})
- \mathcal{I}(U_\trip{c},U_{\trip{c}_{\{1\}}})
- \mathcal{I}(U_\trip{c},U_{\trip{c}_{\{2\}}})
+ \mathcal{I}(U_\trip{c},U_{\trip{c}_{\{1,2\}}})
\end{equation}
as a sum of differences by defining
\begin{eqnarray*}
\mathcal{A}_0&=&\Rsupp_{\trip{c,c}} \setminus \Rsupp_{\trip{c},\trip{c}_{\{2\}}}\\
\mathcal{A}_1&=&\Rsupp_{\trip{c},\trip{c}_{\{1\}}} \setminus \Rsupp_{\trip{c},\trip{c}_{\{1,2\}}}.
\end{eqnarray*}
Thus we have
$\mathcal{I}(U_\trip{c},V_\trip{c})=\vert \mathcal{A}_0 \vert - \vert
\mathcal{A}_1 \vert$.   We use $(\trip{d,d}')$ to denote either of the
pairs $(\trip{c},\trip{c}_{\{2\}})$ or $(\trip{c}_{\{1\}},\trip{c}_{\{1,2\}})$,
for ease of notation.

Suppose
first that $s_1^{(\alpha,\beta)} \in \Rsuppcd\setminus\Rsupp_\trip{c,d'}$.
Then, comparing the constraints on $\alpha$ and $\beta$
in Theorem~\ref{T:supp}(ii) for  $\trip{d}$ and $\trip{d}'$,
we see that necessarily
$\alpha =d_2-M=n-M$ and
$$
\max\{N,M-c_1+\alpha\}\leq \beta
\leq \min\{d_1-M+\alpha,c_2-M\}.
$$
Since $d_1 \geq M$ by hypothesis, $c_2-M= n-M\leq \alpha+ d_1-M$ so
these inequalities simplify to $\max\{N,n-m\} \leq \beta \leq n-M$.
This constraint on the pair $(\alpha,\beta)$ is independent
of the value of $d_1 \in \{m-1,m\}$ so
$s_1^{(\alpha,\beta)} \in \mathcal{A}_0$ if and only if
$s_1^{(\alpha,\beta)} \in \mathcal{A}_1$.  Hence these cosets
contribute nothing to the overall sum (\ref{E:icalc}).

Now suppose that $s_2^{(\alpha,\beta)} \in \Rsuppcd$.
Then Theorem~\ref{T:supp}(iii)
implies that $\beta - \alpha \leq (c_1-N)-N$;
but this bound is at most $d_2'-N$ since $c_1-N = n-N
\leq n-1 = d_2'$.  Similarly, $d_1 \leq c_1$ and $\beta > 0$ together
imply that $\beta-\alpha \geq N-c_1$, regardless of the value of $d_1\in \{m-1,m\}$.
All other conditions on $(\alpha, \beta)$ being unchanged in passing
from $(\trip{c},\trip{d})$ to $(\trip{c},\trip{d'})$,
we deduce that  $s_2^{(\alpha,\beta)} \in \Rsupp_{\trip{c,d'}}$.
Hence none of these cosets appear in either $\mathcal{A}_0$ or $\mathcal{A}_1$.

Finally, consider distinguished coset representatives of the form
$t_{\trip{a},x} \in \Rcd^1$.  First note that
$$
\Tcd\setminus \T_{\trip{c,d'}} = \{(a_1,n,n) \colon 1 \leq a_1 \leq \underline{d}_1\},
$$
and $\vert \Xacd \vert  = \vert \mathtt{X}^\trip{a}_{\trip{c,d'}}\vert  = 1$, since $a_3-a_2=0$.
Considering which of these are in $\Rsuppcd$, we deduce that these triples give rise to $m-M+1$ coset representatives in $\mathcal{A}_0$ and $m-M$ of
them in $\mathcal{A}_1$.

Suppose now that $\trip{a} \in \T_\trip{c,d'}$.
Since $0 \leq d_2'-a_2 = d_3-1-a_2 = (d_3-a_3)+(a_3-a_2)-1$, we
have  $d_2'-a_2 \geq \min\{d_3-a_3,a_3-a_2\}$ and so
it follows that $\trip{a(c,d)} = \trip{a(c,d')}$.
Similarly, if $a_1+a_2=a_3$ then
$a_2<a_3$ implies that $d_2'-a_2 \geq d_3-a_3$ so
$\trip{a(c,d)}'=\trip{a(c,d')}'$.  Hence for all $\trip{a} \in \T_\trip{c,d'}$ we have
$\Xacd = \mathtt{X}^\trip{a}_{\trip{c,d}'}$.

So suppose $t_{\trip{a},x} \in \Rsuppcd^1 \cap \mathtt{R}_\trip{c,d'}^1$.
We first note that if $t_{\trip{a},x}$ falls under any of the conditions
(2a), (2c), (3a) or (3c) of Theorem~\ref{T:supp}, then the
inequality $a_2<a_3$ implies $d_2' - a_2 \geq d_3-a_3$.  Consequently, this
condition is unchanged  in passing from $\trip{d}$ to $\trip{d}'$ and
so $t_{\trip{a},x} \in \Rsupp_\trip{c,d'}$.  Similarly, if $t_{\trip{a},x}$
falls under condition (3b), then $a_2<a_3$ so $d_2'-(a_3-a_1) \geq d_3-(a_3-a_1)-(a_3-a_2)$;
again we deduce $t_{\trip{a},x} \in \Rsupp_\trip{c,d'}$.  So none of
these occur in either $\mathcal{A}_0$ or $\mathcal{A}_1$.

On the other hand, if $t_{\trip{a},x}$
falls under condition (2b) for the pair $(\trip{c,d})$,
then it fails (2b) for the pair $(\trip{c,d'})$
exactly when
$a_3=a_2 \geq N$, $d_1 \geq M$, $d_2-(a_3-a_1)=M$ and
$1 \leq a_1 < M$.  Hence, noting also that this condition is
independent of the choice of $d_1 \in \{m-1,m\}$,
 all such $t_{\trip{a},x}$ lie in both $\mathcal{A}_0$
and $\mathcal{A}_1$.

We deduce that $\vert \mathcal{A}_0 \vert - \vert
\mathcal{A}_1 \vert$ = 1 so the quotient $V_\trip{c}$ is
indeed irreducible.

The case for $m \geq n$ follows by an analogous argument,
where we interchange the roles of $\trip{c}_{\{1\}}$ and $\trip{c}_{\{2\}}$
throughout.

It only remains to show the case where $m = 1$ and  $M=0$.  In this case,
$\trip{c} = (1,n,n)$ with $n > N \geq 1$ so we have
$\trip{c}_{\{1\}} = (0,n,n)$,
$\trip{c}_{\{2\}} = (1,n-1,n)$ and
$\trip{c}_{\{1,2\}} = (0,n-1,n-1)$.
Define $\mathcal{A}_0$ and $\mathcal{A}_1$ as above.  Since $c_1-a_1 = 0$
for all $\trip{a} \in \Tcd$, for any $\trip{d}$, and since cases (i)(2) and (i)(3) cannot
occur, the analysis is much simplified from the above.  We readily
see that $\mathcal{A}_0 = \{ s_1^{(n,n-1)}, s_1^{(n,n)}, t_{(1,n,n),1} \}$
whereas $\mathcal{A}_1 = \{ t_{(1,n,n),1}, t_{(1,n-1,n),1} \}$.
Thus we  conclude again in this case that $V_\trip{c}$ is irreducible.
\end{proof}

Recalling that $V^{K_n} \simeq U_{(n,n,n)}$ we immediately have the
following Corollary.

\begin{corollary} \label{C:maximal}
If $n>N$ then the quotient $V_{(n,n,n)}$ is the unique irreducible
of maximal dimension in $V^{K_n}$.
\end{corollary}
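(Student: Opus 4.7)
The plan is to exhibit $V_{(n,n,n)}$ itself as the unique irreducible of maximal dimension in $V^{K_n}\simeq U_{(n,n,n)}$ and then beat every other constituent on dimension. First, irreducibility: Theorem~\ref{T:otherirred2} applied with both its ``$m$'' and ``$n$'' equal to the present $n$ (note $n>N\geq M$, so $n>M$, and $\max\{n,n\}=n$) immediately yields that $V_{(n,n,n)}$ is irreducible.

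Next, I would argue by induction up the poset $\{\trip{c}\in\Ttm:\trip{c}\preceq(n,n,n)\}$, using the defining short exact sequence
\[
0\to\sum_{\trip{d}\prec\trip{c},\,\trip{d}\in\Ttm}U_\trip{d}\to U_\trip{c}\to V_\trip{c}\to 0,
\]
that every irreducible constituent of $U_{(n,n,n)}$ appears as a constituent of some $V_\trip{c}$ with $\trip{c}\preceq(n,n,n)$ in $\Ttm$; in particular $V_{(n,n,n)}$ itself occurs, as the top quotient. It therefore suffices to beat each such $V_\trip{c}$ with $\trip{c}\prec(n,n,n)$ on dimension.

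For this I would compute $\dim V_{(n,n,n)}$ via Theorem~\ref{T:calc}: since $(n,n,n-1)\notin\T$ while $(n-1,n,n),(n,n-1,n)\in\Ttm$, one has $\Sset_{(n,n,n)}=\{1,2\}$ with $\trip{c}_{\{1,2\}}=(n-1,n-1,n)$; all four triples satisfy $c_1c_2>0$, so $\dim U_\trip{c}=(q+1)(q^2+q+1)q^{c_1+c_2+c_3-3}$ applies uniformly, and the four-term inclusion--exclusion collapses to
\[
\dim V_{(n,n,n)}=(q-1)^2(q+1)(q^2+q+1)q^{3n-5}.
\]
Any $\trip{c}\prec(n,n,n)$ in $\Ttm$ has $c_1+c_2+c_3\leq 3n-1$, so when $c_1c_2>0$ one has $\dim U_\trip{c}\leq(q+1)(q^2+q+1)q^{3n-4}$; the degenerate triples $(0,c_2,c_2)$ (only possible if $M=0$) give the smaller bound $(q^2+q+1)q^{2n-2}$. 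Since any irreducible constituent of $V_\trip{c}$ has dimension at most $\dim U_\trip{c}$, the desired inequality $\dim V_{(n,n,n)}>\dim U_\trip{c}$ reduces in every case to the elementary bound $(q-1)^2>q$, which holds under the standing hypothesis $q>3$.

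The main obstacle, such as it is, is the dimension bookkeeping in the last paragraph: one has to verify that neither the ``corner'' triples $(n-1,n,n),(n,n-1,n)$ nor the degenerate triples with $c_1=0$ conspire to match the dimension of $V_{(n,n,n)}$. Since both comparisons boil down to $(q-1)^2>q$, which is comfortably satisfied for $q>3$, the conclusion that $V_{(n,n,n)}$ is the unique irreducible of maximal dimension in $V^{K_n}$ follows.
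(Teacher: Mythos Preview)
Your argument is correct and is essentially a fleshed-out version of what the paper leaves implicit: the paper offers no proof beyond the line ``Recalling that $V^{K_n}\simeq U_{(n,n,n)}$ we immediately have the following Corollary,'' relying on Theorem~\ref{T:otherirred2} for irreducibility and tacitly on a dimension comparison for uniqueness and maximality. Your inclusion--exclusion computation of $\dim V_{(n,n,n)}$ and the bound $\dim U_\trip{c}\leq (q+1)(q^2+q+1)q^{3n-4}$ for $\trip{c}\prec(n,n,n)$, reducing everything to $(q-1)^2>q$, is exactly the sort of verification the paper omits.
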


The strict inequalities in Theorem~\ref{T:otherirred2}
are necessary, as the following proposition shows.

\begin{proposition} \label{P:red} \
\begin{enumerate}[(1)]
\item Let $\trip{c} = (M,n,n)$ with $n > N$.  Then
$$
\I(V_\trip{c},V_\trip{c}) = \begin{cases}
n-N+1 & \text{if $n < M+N$;}\\
M+1 & \text{if $n \geq M+N$}.
\end{cases}
$$
\item Let $\trip{c} = (n,N,n)$ with $n \geq N$.  Then
$$
\I(V_\trip{c},V_\trip{c}) = \begin{cases}
n-N+1 & \text{if $n < 2N$};\\
N+1 & \text{if $n \geq 2N$}.
\end{cases}
$$
\end{enumerate}
\end{proposition}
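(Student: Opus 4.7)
The plan is to mimic the structure of the proof of Theorem~\ref{T:otherirred2}. Once $\Sset_\trip{c}$ is identified as a singleton, Corollary~\ref{C:intops} expresses $\I(V_\trip{c},V_\trip{c})$ as the four-term alternating sum
\begin{equation*}
|\Rsupp_{\trip{c,c}}|-|\Rsupp_{\trip{c,c}'}|-|\Rsupp_{\trip{c}',\trip{c}}|+|\Rsupp_{\trip{c}',\trip{c}'}|,
\end{equation*}
which by Corollary~\ref{C:consistency} reduces to $|\mathcal{A}_0'|-|\mathcal{A}_1'|$ where
\begin{equation*}
\mathcal{A}_0'=\Rsupp_{\trip{c,c}}\setminus\Rsupp_{\trip{c,c}'}\quad\text{and}\quad \mathcal{A}_1'=\Rsupp_{\trip{c}',\trip{c}}\setminus\Rsupp_{\trip{c}',\trip{c}'}.
\end{equation*}
For part~(1) with $M\geq 1$, one verifies $\Sset_\trip{c}=\{2\}$ and $\trip{c}'=(M,n-1,n)$ (the other candidates $\trip{c}_{\{1\}}$ and $\trip{c}_{\{3\}}$ fail to lie in $\Ttm$); for part~(2) one has $\Sset_\trip{c}=\{1\}$ and $\trip{c}'=(n-1,N,n)$. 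The corner cases ($M=0$ in part~(1) uses $\trip{c}'=(0,n-1,n-1)$, and $n=N$ in part~(2) reduces to $V_\trip{c}=U_\trip{m}$, irreducible by~\cite{Howe1}) are handled separately.

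I then compute the contribution of each Weyl element to $|\mathcal{A}_0'|-|\mathcal{A}_1'|$ using Theorem~\ref{T:supp}. The types $s_1s_2,s_2s_1,w_0$ contribute nothing by Theorem~\ref{T:supp}(iv). For the ``off-diagonal'' simple reflection---$s_2$ in part~(1), $s_1$ in part~(2)---the upper bound in Theorem~\ref{T:supp}(ii)--(iii) is $M-N\leq 0$ (resp.\ $N-M\leq 0$), which together with the assumption $M\leq N$ makes the contribution vanish across all four pairs (the borderline $M=0$ cases yield a symmetric alternating sum of zero). For the ``matching'' simple reflection ($s_1$ in part~(1), $s_2$ in part~(2)), the constraints collapse to $\alpha=\beta$ on an interval, and an elementary set-difference computation yields a contribution of $1$ when $n\geq M+N$ (resp.\ $n\geq 2N$) and $0$ otherwise.

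The main work is the identity contribution. I would enumerate, for each of the four pairs, the triples $\trip{a}\in\T_{\trip{a,b}}$ together with the subcase of Theorem~\ref{T:supp}(i) they satisfy and the value of $|\mathtt{X}^\trip{a}_{\trip{a,b}}|$. Most triples exhibit the same subcase and $|\mathtt{X}|$ across all four pairs, and so cancel. The surviving triples---those in $\mathcal{A}_0'\cup\mathcal{A}_1'$---fall into two families: (i) the ``extremal'' triple $(M,n,n)$ in part~(1) (resp.\ $(n,N,n)$ in part~(2)), which lies in $\T_{\trip{c,c}}$ but not in any of the other three $\T$'s; and (ii) the ``condition~(2)(b) boundary'' triples in part~(1) of the form $(a_1,a_1+n-M,a_1+n-M)$ for $a_1\in[\max\{1,M-n+N\},M-1]$ (and the analogous condition~(3)(b) triples in part~(2)), for which the minimum $c_2-(a_3-a_1)=M$ (resp.\ $c_1-(a_3-a_2)=N$) drops below $M$ (resp.\ $N$) upon the single-coordinate reduction. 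Each such triple has $|\mathtt{X}^\trip{a}|=1$, and direct enumeration gives $|\mathcal{A}_0'|-|\mathcal{A}_1'|=\min\{M,n-N+1\}$ in part~(1) and $\min\{N,n-N+1\}$ in part~(2). Combining with the Weyl reflection contribution yields the proposition.

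The main technical obstacle is verifying that subcases~(2)(a), (2)(c), (3)(a), (3)(c) of Theorem~\ref{T:supp}(i) contribute nothing to the alternating sum. In each, the strict inequality $a_1+a_2<a_3$ or the equality $a_1+a_2=a_3$, together with the subcase hypothesis, forces enough slack in the relevant minima defining $\acd$, $\acd'$, and the subcase inequalities themselves, so that membership and $|\mathtt{X}^\trip{a}|$ are unchanged under the coordinate shift. This is a case-by-case check which, while tedious, is routine.
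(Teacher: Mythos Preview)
Your approach is essentially the same as the paper's: identify $\Sset_\trip{c}$ as a singleton, reduce via Corollary~\ref{C:intops} and Corollary~\ref{C:consistency} to a difference of two set-differences, and carry out the case analysis of Theorem~\ref{T:supp} to isolate the extremal triple, the condition~(2b)/(3b) boundary family, and the single $s_1$ (resp.\ $s_2$) element. The paper organizes this slightly differently---it first computes $\I(U_\trip{c},V_\trip{c})=|\mathcal{A}_0'|$ and then verifies separately that $\Rsupp_{\trip{c}',\trip{c}}=\Rsupp_{\trip{c}',\trip{c}'}$ (i.e.\ $\mathcal{A}_1'=\emptyset$), whence $\I(V_\trip{c},V_\trip{c})=\I(U_\trip{c},V_\trip{c})$---but the underlying computations coincide. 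One small correction: your corner case ``$n=N$ in part~(2) reduces to $V_\trip{c}=U_\trip{m}$'' is only true when $M=N$; when $M<N$ one has $(N,N,N)\succ\trip{m}$ and $\Sset_\trip{c}=\{1\}$, so the general argument (which does give the value $1$) must be invoked there as well.
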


\begin{proof}
To prove part (1), let $\trip{c} = (M,n,n)$ with $M > 0$ and $n > N$.  Then $\Sset_\trip{c} = \{2\}$
corresponding to the triple $\trip{c}_{\{2\}} = (M,n-1,n)$.  We first
compute $\I(U_\trip{c}, V_\trip{c}) = \vert \Rsupp_{\trip{c,c}} \setminus
\Rsupp_{\trip{c,c_{\{2\}}}} \vert$.  For ease of notation,
set $(\trip{d},\trip{d}') = (\trip{c},\trip{c}_{\{2\}})$.

It is easy to see that
$s_1^{(n-M,n-M)} \in \Rsuppcd \setminus \Rsupp_{\trip{c,d}'}$
if $n-M \geq N$, whereas $\Rsuppcd^{s_2} = \Rsupp_{\trip{c,d}'}^{s_2}=\emptyset$.

Of the elements in $\Tcd \setminus \T_{\trip{c,d'}} = \{ (a_1,n,n) \colon 1 \leq a_1 \leq M\}$, only $(M,n,n)$ gives rise to
a representative in $\Rsuppcd$, and
then exactly one, which we'll denote $t_{(M,n,n),1}$.

For each $\trip{a} \in \T_{\trip{c,d'}}$, we have $a_2 < n$ and $a_3 \leq n$.
Since $\acd \leq \min\{a_3-a_2, n-a_3, d_2-a_2\}$ and $a_3 \leq d_2$
we deduce that if $\acd \neq \trip{a}(\trip{c,d}')$ then necessarily $a_3=n$
and $a_3=a_2$, a contradiction.  Similarly, $\acd'$ does not depend on the
value of $d_2$.  Hence $\Xacd = \mathtt{X}^\trip{a}_{\trip{c,d'}}$.


So suppose $t_{\trip{a},x} \in (\Rsuppcd^1 \cap \mathtt{R}_{\trip{c,d'}}) \setminus \Rsupp_{\trip{c,d'}}$.  It does not fall under case (i)(1)
of Theorem~\ref{T:supp} since this case
is independent of $\trip{d}'$; nor can case (i)(3) occur since $a_1 \leq M$.
In cases (i)(2)(a) and (c), the right hand side can depend on the value of
$d_2 \in \{n-1,n\}$ if and only if $a_2 = a_3$, contradicting the hypotheses.
In case (i)(2)(b), which holds only if $a_2 \geq N$,
we must have that $a_3-a_2 = 0$ or else the right hand
side is less than $M$.  It follows that the right hand side depends on
the value of $d_2$ exactly when $a_2=a_3\geq N$ and $n - (a_3-a_1) = M$;
in each of these cases $\vert \Xacd\vert = \vert \mathtt{X}^\trip{a}_{\trip{c,d'}} \vert =1$ and $t_{\trip{a},1} \in \Rsuppcd \setminus \Rsupp_{\trip{c,d'}}$.

We conclude that when $M>0$
\begin{align*}
\Rsupp_\trip{c,c} \setminus \Rsupp_{\trip{c,c}_{\{2\}}} &=
\{ t_{(M-k,n-k,n-k),1} \colon 0 \leq k \leq \min\{M-1,n-N\} \}\\
& \quad  \cup \{ s_1^{(n-M,n-M)} \colon \text{if $n-M \geq N$}\}.
\end{align*}
A simpler analysis, which we consequently omit, allows us to further deduce that $\Rsupp_{\trip{c}_{\{2\}},\trip{c}} = \Rsupp_{{\trip{c}_{\{2\}}},{\trip{c}_{\{2\}}}}$
and so $\I(U_\trip{c},V_\trip{c}) = \I(V_\trip{c},V_\trip{c})$, and this has the value stated in the theorem.

When $M=0$, we have instead $\trip{c} = (0,n,n)$ and $\trip{c}_{\{3\}} = (0,n-1,n-1)$, and $\Rsuppcd = \Rsuppcd^1$.  Since neither (i)(2) nor (i)(3) of Theorem~\ref{T:supp} can apply, and
$\trip{a}(\trip{d,d'}) =  \trip{a}(\trip{d,d'})'=0$ for all choices of $\trip{d},\trip{d}' \in \{ \trip{c}, \trip{c}_{\{3\}}\}$ and for all $\trip{a} \in \T_{\trip{d,d}'}$,
we readily conclude that $\Rsupp_{\trip{c,c}} \setminus \Rsupp_{\trip{c,c}_{\{3\}}} = \{ t_{(1,n,n),1} \}$.  Hence the quotient
$V_\trip{c}$ is irreducible in this case.

To prove part (2), let $\trip{c} = (n,N,n)$ with $n \geq N$.  Then $\Sset_\trip{c} = \{1\}$ with corresponding triple $(n-1,N,n)$.  Reasoning as above, we deduce
readily that $s_2^{(n-N,n-N)} \in \Rsupp_{\trip{c,c}} \setminus \Rsupp_{\trip{c,c}_{\{1\}}}$ whenever $n\geq 2N$ and
that $\Rsupp^{s_1}_{\trip{c,c}} = \Rsupp^{s_1}_{\trip{c,c}_{\{1\}}}=\emptyset$.

Set $(\trip{d},\trip{d}') = (\trip{c},\trip{c}_{\{1\}})$.
Note that $\Tcd \setminus \T_{\trip{c,d}'} = \{(n,a_2,n) \colon 1 \leq a_2 \leq N\}$ and each of
these has $\vert \Xacd \vert = 1$.  These triples thus give rise to only one
coset in $\Rsuppcd^1$, namely that represented by $t_{(n,N,n),1}$.

Of those $\trip{a} \in  \T_{\trip{c,d}'}$, one sees as above that $\Xacd = \mathtt{X}^\trip{a}_{\trip{c,d'}}$.   For such a triple $\trip{a}$, if
$t_{\trip{a},x} \in \Rsuppcd \setminus \Rsupp_{\trip{c,d'}}$ then
it falls under case (i)(3)(b) of Theorem~\ref{T:supp} and
we deduce as above that $N \leq a_1=a_3 \leq n$ and
$n-a_3 = N-a_2$.  These conditions further imply that $\vert \Xacd \vert = 1$
meaning each such triple gives rise to a unique double coset.

We conclude that
\begin{align*}
 \Rsupp_\trip{c,c} \setminus \Rsupp_{\trip{c,c}_{\{1\}}} &=
\{ t_{(n-k,N-k,n-k),1} \colon 0 \leq k \leq \min\{n-N,N-1\}\} \\
& \quad
\cup \{ s_2^{(n-N,n-N)} \colon \text{if $n-N \geq N$}\}.
\end{align*}
It is readily verified that
$\Rsupp_{\trip{c}_{\{1\}},\trip{c}} = \Rsupp_{{\trip{c}_{\{1\}}},{\trip{c}_{\{1\}}}}$,
and so $\I(U_\trip{c},V_\trip{c}) = \I(V_\trip{c},V_\trip{c})$.  Counting
the double cosets in the expression above yields part (2) of the theorem.
\end{proof}

\section{Examples} \label{S:examples}

We conclude the paper with two examples to illustrate
the results in Section~\ref{S:counting}.

\begin{example} \label{Ex:1}

\begin{figure}[th]
$$\xymatrix{
         &         &      \underset{1}{(4,4,4)}\ar[dr]\ar[dl]  \\
       & \underset{1}{(3,4,4)}\ar[dr]\ar[dl]&               & \underset{1}{(4,3,4)}\ar[dr]\ar[dl]  \\
\underset{3}{(2,4,4)}\ar[d]&&\underset{q-1}{(3,3,4)}\ar[drr]\ar[dll]\ar[d]  &    & \underset{3}{(4,2,4)}\ar[d] \\
 \underset{1}{(2,3,4)}\ar[d]\ar[drr]&& \underset{1}{(3,3,3)}\ar[drr]\ar[dll] && \underset{1}{(3,2,4)}\ar[d]\ar[dll] \\
 \underset{2}{(2,3,3)}\ar[drr]&&  \underset{1}{(2,2,4)}\ar[d]&    & \underset{2}{(3,2,3)}\ar[dll] \\
         && \underset{1}{(2,2,3)}\ar[d]  \\
     && \underset{1}{(2,2,2)}\\
}$$
\begin{caption}{\protect{\label{F:example1}} Reducibility of $V_\trip{c}$ for $V_\chi$ with  $M=2$ and $N=2$. }
\end{caption}
\end{figure}

\begin{table}[th]
\begin{tabular}{|ll|}
\hline
Quotient & Dimension \\ \hline
$V_{(4,4,4)}$ & $q^7(q-1)^2\alpha$\\
$V_{(3,4,4)}, V_{(4,3,4)}$ & $q^6(q-1)^2\alpha$\\
$V_{(3,3,4)}$ & $q^4(q-1)^3\alpha$\\
$V_{(2,4,4)}, V_{(4,2,4)}$ & $q^6(q-1)\alpha$\\
$V_{(2,3,4)}, V_{(3,3,3)}, V_{(3,2,4)}$ & $q^4(q-1)^2\alpha$\\
$V_{(2,3,3)}, V_{(2,2,4)}, V_{(3,2,3)}$ & $q^4(q-1)\alpha$\\
$V_{(2,2,3)}$ & $q^3(q-1)\alpha$\\
$V_{(2,2,2)}$ & $q^3\alpha$\\ \hline
\multicolumn{2}{c}{\ } \\
\end{tabular}
\begin{caption}{\protect{\label{T:example1}}
Dimensions of $V_\trip{c}$ for $V_\chi$ with $M=2$ and $N=2$.}
\end{caption}
\end{table}

Suppose that $M=N=2$ and let us consider the decomposition of
$V^{K_4}$ under $K$.  The values of $\mathcal{I}(V_\trip{c},V_\trip{d})$
are calculated using Corollary~\ref{C:intops} and Theorem~\ref{T:supp},
with several values identified by Theorems~\ref{T:irred}, \ref{T:otherirred1}
and \ref{T:otherirred2} and Proposition~\ref{P:red}.
The remaining computations were implemented in GAP~\cite{GAP}
and the results are
represented schematically in Figure~\ref{F:example1}, as follows.

 Each triple $\trip{c}$ in Figure~\ref{F:example1}
corresponds to the induced representation $U_\trip{c}$, and the
number beneath it is the value of $\mathcal{I}(V_\trip{c},V_\trip{c})$.  The arrows imply
the partial order $\succeq$ on $\T$; hence the set of all
components of the diagram below and including $\trip{c}$ may be identified with
the whole of $U_\trip{c}$.
For reference, we list in Table~\ref{T:example1} the dimensions of the quotients
$V_\trip{c}$ occuring in Figure~\ref{F:example1}.  These are calculated using Theorem~\ref{T:calc}.  We abbreviate $\alpha = (q+1)(q^2+q+1)$.

Figure~\ref{F:example1} reveals several typical features of the $K$-rep\-re\-sen\-ta\-tions
$V_\trip{c}$.
For example, we note that while many $V_\trip{c}$ are irreducible, several are
not.  Besides those identified by Proposition~\ref{P:red}, for whom the number of intertwining operators grows at most linearly with $c_3$,
there exist components such
as $V_{(3,3,4)}$, for which the number of intertwining operators
is a polynomial function of $q$.  Such components
occur more frequently in $V^{K_n}$ as $n$ increases, since they come
into  existence only when $\vert \mathtt{X}^\trip{a}_\trip{c,c} \vert$ is a
polynomial in $q$, that is, when $\trip{a}(\trip{c,c}) >0$.
\end{example}

\begin{example} \label{Ex:2}

\begin{figure}[t]
$$\xymatrix{
 \underset{2\; \dagger}{(1,4,4)}\ar[d]  & \underset{q-2 \; \; \dagger}{(2,3,4)}\ar[dr]\ar[dl]\ar[d] & \underset{1}{(3,3,3)}\ar[dr]\ar[dl] & \underset{1}{(3,2,4)}\ar[d]\ar[dl]        \\
 \underset{1\; *}{(1,3,4)}\ar[d] & \underset{1}{(2,3,3)}\ar[dr]\ar[dl]  & \underset{1 \; *}{(2,2,4)}\ar[d] & \underset{2}{(3,2,3)}\ar[dl]  \\
 \underset{2}{(1,3,3)}\ar[d]  & & \underset{1}{(2,2,3)}\ar[dll]\ar[d] \\
 \underset{1}{(1,2,3)}\ar[dr]  & & \underset{1}{(2,2,2)}\ar[dl] \\
& \underset{1}{(1,2,2)}\\
}$$
\begin{caption}{\protect{\label{F:example2}}
Reducibility of and equivalences between $V_\trip{c}$ for $V_\chi$ with $M=1$ and $N=2$.}
\end{caption}
\end{figure}

\begin{table}[t]
\begin{tabular}{|ll|}
\hline
Quotient & Dimension  \\ \hline
$V_{(1,4,4)}$ & $q^5(q-1)\alpha$ \\
$V_{(3,3,3)}, V_{(3,2,4)}$ & $q^4(q-1)^2\alpha$  \\
$V_{(2,3,4)}$ & $q^4(q-1)(q-2)\alpha$ \\
 $V_{(1,3,4)}, V_{(2,2,4)}, V_{(3,2,3)}$ & $q^4(q-1)\alpha$\\
$V_{(2,3,3)}$ & $q^3(q-1)^2\alpha$ \\
    $V_{(1,3,3)}$ & $q^3(q-1)\alpha$\\
$V_{(2,2,3)}$ & $q^2(q-1)^2\alpha$\\
$V_{(1,2,3)}, V_{(2,2,2)}$ & $q^2(q-1)\alpha$\\
$V_{(1,2,2)}$ & $q^2\alpha$ \\ \hline
\multicolumn{2}{c}{} \\
\end{tabular}
\begin{caption}{\protect{\label{T:example2}}
Dimensions of $V_\trip{c}$ for $V_\chi$ with $M=1$ and $N=2$.  }
\end{caption}
\end{table}

Consider a character $\chi$ for which
$M=1$ and $N=2$.  Figure~\ref{F:example2} describes a portion of the
restriction to $K$ of $V_\chi$, namely, all subrepresentations
$U_\trip{c}$ for which $V_\trip{c}$ has dimension of order $q^9$ or less.
In terms of triples, this implies that we consider the elements
$\trip{c} \in \T_\trip{m}$ for which $c_1+c_2+c_3 \leq 9$.
Again, the number of intertwining operators between each pair of quotients
is determined by Corollary~\ref{C:intops}.

This example illustrates a phenomenon not present
in Example~\ref{Ex:1}.
For instance, there are two pairs of isomorphic irreducible representations:
$V_{(1,3,4)} \simeq V_{(2,2,4)}$ (indicated by $*$ in Figure~\ref{F:example2})
and one of the two inequivalent irreducible summands of
$V_{(1,4,4)}$ is isomorphic to exactly one of the irreducible summands
of $V_{(2,3,4)}$ (indicated by $\dagger$ in Figure~\ref{F:example2}).
Such pairs of isomorphic irreducibles, for
distinct triples $\trip{c}, \trip{d} \in \T_\trip{m}$,
can occur only when $c_3=d_3$, since otherwise
the corresponding groups $C_\trip{c}$ and $C_\trip{d}$ lie
in different levels of the filtration of $K$ by the normal subgroups
$K_n$, which in turn would imply that $V_\trip{c}$
and $V_\trip{d}$ cannot intertwine as representations of $K$.

The dimensions of the representations in Figure~\ref{F:example2} are
given in Table~\ref{T:example2}. We have again abbreviated $\alpha = (q+1)(q^2+q+1)$.
We conjecture that $V_{(2,3,4)}$ in fact decomposes as a sum of
$q-2$ distinct irreducibles, each of dimension equal to that of
$V_{(1,3,4)}$, $V_{(2,2,4)}$ and $V_{(3,2,3)}$.
This would
be consistent with the remaining irreducible in $V_{(1,4,4)}$ having
dimension equal to that of $V_{(3,3,3)}$ and $V_{(3,2,4)}$.
\end{example}

\end{document}